\theoremstyle{plain} 
\newtheorem{theorem}{Theorem}
\newtheorem{lemma}[theorem]{Lemma}
\newtheorem{proposition}[theorem]{Proposition}
\theoremstyle{definition} 
\theoremstyle{definition} 
\theoremstyle{remark} 
\theoremstyle{remark} 
\newtheorem{remark}[theorem]{Remark}
\newtheorem*{remark*}{Remark}
  \renewcommand\section{\@startsection {section}{1}{\z@}%
                                   {-\bigskipamount}%
                                   {\medskipamount}%
                                   {\large\bfseries
                                   \raggedright}}
  \renewcommand\subsection{\@startsection {subsection}{2}{\z@}%
                                   {-\medskipamount}%
                                   {\smallskipamount}%
                                   {\bfseries
                                   \raggedright}}
\newcommand{\argmax}{\operatorname{argmax}}
\renewcommand{\gg}{>\kern-2pt>}
\renewcommand{\ll}{<\kern-2pt<}
\newcommand{\simpl}{\mathsf{simpl}}
\newcommand{\FM}{\mathsf{FM}}
\newcommand{\ceil}[1]{\lceil #1 \rceil}
\renewcommand{\gg}{>\kern-2pt>}
\renewcommand{\ll}{<\kern-2pt<}
\newcommand{\dd}{\partial}
\renewcommand{\dd}{\operatorname{d}\!}
\renewcommand{\le}{\leqslant}
\renewcommand{\ge}{\geqslant}
\newcommand{\al}{\alpha}
\newcommand{\be}{\beta}
\newcommand{\Si}{\Sigma}
\newcommand{\de}{\delta}
\newcommand{\De}{\Delta}
\newcommand{\Om}{\Omega}
\newcommand{\om}{\omega}
\newcommand{\LD}{\mathcal{L}\!\mathcal{D}}
\renewcommand{\LD}{\mathcal{L}{\kern -1.9pt}\mathcal{D}}
\renewcommand{\LD}{\mathcal{D}}
\renewcommand{\LD}{\mathcal{L}{\kern -4pt}\mathcal{C}}
\renewcommand{\LD}{\mathcal{R}{\kern -3pt}\mathcal{C}}
\newcommand{\ii}[1]{\mathrm{I}\!\left\{#1\right\}}
\renewcommand{\ii}[1]{\,\mathrm{I}\!\left\{#1\right\}}
\renewcommand{\P}{\operatorname{\mathsf{P}}} 
\newcommand{\E}{\operatorname{\mathsf{E}}}
\newcommand{\F}[1]{\mathcal{F}_+^{#1}}
\renewcommand{\F}{\mathcal{F}}
\renewcommand{\d}{\mathrm{d}}
\newcommand{\vp}{\varepsilon}
\begin{document}


\title{Exact upper and lower bounds on the misclassification probability}


\author{Iosif Pinelis}

\address{Department of Mathematical Sciences\\
Michigan Technological University\\
Hough\-ton, Michigan 49931, USA\\
E-mail: ipinelis@mtu.edu}

\keywords{Classification problem, 
misclassification probability, total variation norm, 
Bayes estimators, maximum likelihood estimators}

\subjclass[2010]{Primary: 60E15, 62H30, 91B06. Secondary: 26D15, 26D20, 62C10, 68T10}



\begin{abstract} 
Exact lower and upper bounds on the best possible misclassification probability for a finite number of classes are obtained in terms of the total variation norms of the differences between the sub-distributions over the classes. 
These bounds are compared with the exact bounds in terms of the conditional entropy obtained by Feder and Merhav.  
\end{abstract}

\maketitle

\tableofcontents

\section{Introduction, summary and discussion
}\label{intro} 

Let $X$ and $Y$ be random variables (r.v.'s) defined on the same probability space $(\Om,\F,\P)$, $X$ with values in 
a set $S$ (endowed with a sigma-algebra $\Si$) and $Y$ with values in the set $[k]:=\{1,\dots,k\}$, where $k$ is a natural number; to avoid trivialities, assume $k\ge2$. 

The sets $\Om$ and $[k]$ may be regarded, respectively, as the population of objects of interest and the set of all possible classification labels for those objects. For each ``object'' $\om\in\Om$, the corresponding values $X(\om)\in S$ and $Y(\om)\in[k]$ of the r.v.'s $X$ and $Y$ may be interpreted as the (correct) description of $\om$ and the (correct) classification label for $\om$, respectively. 

Alternatively, $Y(\om)$ may be interpreted as the signal entered at the input side of a device -- with its possibly corrupted, output version $X(\om)$. 

The problem is to find a good or, better, optimal way to reconstruct, for each $\om\in\Om$, the correct label (or input signal) $Y(\om)$ based on the description (or, respectively, the output signal) $X(\om)$. 
To solve this problem, one uses a 
measurable function $f\colon S\to[k]$, referred to as a classification rule or, briefly, a classifier, which assigns a label (or an input signal) $f(x)\in[k]$ to each possible description (or, respectively, to each possible output signal) $x\in S$. 
Then 
\begin{equation*}
	p_f:=\P(f(X)\ne Y)
\end{equation*}
is the misclassification probability for the classifier $f$. 

For each $y\in[k]$, let $\mu_y$ be the sub-probability measure on $\Si$ defined by the condition
\begin{equation}\label{eq:mu_i}
\mu_y(B):=\P(Y=y,X\in B)	
\end{equation}
for $B\in\Si$, so that 
\begin{equation}\label{eq:mu:=}
	\mu:=\mu_1+\dots+\mu_k
\end{equation}
is the probability measure that is the distribution of $X$ in $S$, and let 
\begin{equation*}
	\rho_y:=\frac{\d\mu_y}{\d\mu},
\end{equation*}
the density of $\mu_y$ with respect to $\mu$. 

The value $y\in[k]$ may be considered a parameter, so that the problem may be be viewed as one of Bayesian estimation (of a discrete parameter, with values in the finite set $[k]$). 
If the r.v.\ $X$ is discrete as well, then of course 
\begin{equation*}
	\rho_y(x)=\P(Y=y|X=x)
\end{equation*}
for each $x\in S$ with $\P(X=x)\ne0$. So, for each such $x$, the function $y\mapsto\rho_y(x)$ may be referred to as the probability mass function of the posterior distribution of the parameter corresponding to the observation $x$. 

The following proposition is, essentially, a well-known fact of Bayesian estimation: 

\begin{proposition}\label{prop:f_*}
For each $x\in S$, let $f_*(x):=\min\argmax_y\rho_y(x)$, where \break  $\argmax_y\rho_y(x):=\{y\in[k]\colon\rho_y(x)=\max_{z\in[k]}\rho_z(x)\}$; thus, $f_*(x)$ is the smallest maximizer of $\rho_y(x)$ in $y\in[k]$. Then the function $f_*$ is a classifier, and 
\begin{equation*}
	p_*:=p_{f_*}=1-\int_S\max\limits_{y=1}^k\rho_y(x)\,\mu(\d x)\le p_f
\end{equation*}
for any classifier $f$, so that $p_*$ is the smallest possible misclassification probability. 
\end{proposition}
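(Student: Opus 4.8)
The plan is to prove the whole statement from a single identity: that for every classifier $f$,
\begin{equation*}
	p_f=1-\int_S\rho_{f(x)}(x)\,\mu(\d x).
\end{equation*}
Granting this, the inequality $p_*\le p_f$ and the displayed formula for $p_*$ are both immediate, because pointwise $\rho_{f(x)}(x)\le\max_{y\in[k]}\rho_y(x)$, with equality when $f=f_*$.

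First I would take care of measurability. By \eqref{eq:mu:=} each $\mu_y$ satisfies $\mu_y\le\mu$ as measures, so $\mu_y$ is absolutely continuous with respect to $\mu$ and $\rho_y=\d\mu_y/\d\mu$ is a genuine $\Si$-measurable function; consequently $M:=\max_{y\in[k]}\rho_y$ is $\Si$-measurable, being a maximum of finitely many measurable functions. Since $[k]$ is finite, $\argmax_y\rho_y(x)$ is a nonempty subset of $[k]$ for every $x$, so $f_*$ is defined on all of $S$, and for each $j\in[k]$
\begin{equation*}
	f_*^{-1}(\{j\})=\{x\in S\colon\rho_j(x)=M(x)\}\cap\bigcap_{i=1}^{j-1}\{x\in S\colon\rho_i(x)<M(x)\}\in\Si,
\end{equation*}
so $f_*\colon S\to[k]$ is measurable, i.e.\ a classifier.

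For the identity, I would fix an arbitrary classifier $f$, decompose the event $\{f(X)=Y\}$ into the disjoint events $\{Y=y,\ X\in f^{-1}(\{y\})\}$ over $y\in[k]$, and apply the definition \eqref{eq:mu_i} together with $\rho_y=\d\mu_y/\d\mu$:
\begin{align*}
	p_f&=1-\P\bigl(f(X)=Y\bigr)=1-\sum_{y=1}^k\P\bigl(Y=y,\ X\in f^{-1}(\{y\})\bigr)\\
	&=1-\sum_{y=1}^k\mu_y\bigl(f^{-1}(\{y\})\bigr)=1-\sum_{y=1}^k\int_S\ii{f(x)=y}\,\rho_y(x)\,\mu(\d x)\\
	&=1-\int_S\rho_{f(x)}(x)\,\mu(\d x),
\end{align*}
the last equality holding because for each fixed $x$ exactly one summand is nonzero. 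Then $\rho_{f(x)}(x)\le M(x)$ for every $x$ (as $f(x)\in[k]$) gives $p_f\ge1-\int_S M\,\d\mu$ for every classifier $f$, while $\rho_{f_*(x)}(x)=M(x)$ for every $x$ by the definition of $f_*$, so the same identity gives $p_{f_*}=1-\int_S M\,\d\mu$; combining, $p_*:=p_{f_*}=1-\int_S\max_{y=1}^k\rho_y\,\d\mu\le p_f$ for every $f$.

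I do not expect a genuine obstacle here; the argument is essentially bookkeeping. The two places that call for a little care are the measurability of $f_*$ — handled above by writing each level set $f_*^{-1}(\{j\})$ as a Boolean combination of the measurable sets $\{\rho_i\le\rho_j\}$ — and the passage from $\P$-probabilities to $\mu$-integrals, which needs nothing beyond the trivial domination $\mu_y\le\mu$ that makes the density $\rho_y$ exist and satisfy $\mu_y(A)=\int_A\rho_y\,\d\mu$ for $A\in\Si$.
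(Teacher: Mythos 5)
Your proof is correct and follows essentially the same route as the paper's: the same decomposition of $\{f(X)=Y\}$ over the classes $y\in[k]$, the same passage to $\mu$-integrals via the densities $\rho_y$, the same pointwise comparison with $\max_y\rho_y$, and an equivalent Boolean expression of $f_*^{-1}(\{j\})$ for measurability. No gaps.
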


The proofs of all statements that may need a proof are deferred to Section~\ref{proofs}. 

Let 
\begin{equation}\label{eq:De}
	\De:=\sum_{1\le y<z\le k}\|\mu_y-\mu_z\|=\sum_{1\le y<z\le k}|\rho_y-\rho_z|\d\mu, 
\end{equation}
where $\|\cdot\|$ is the total variation norm. 

In the ``population'' model, the measure $\mu_y$ conveys two kinds of information: (i) the relative size $\frac{\|\mu_y\|}{\|\mu\|}=\|\mu_y\|$ (of the set of all individual descriptions) of the $y$th subpopulation (of the entire population $\Om$) consisting of the objects that carry the label $y$ and (ii) the (conditional) probability distribution $\frac{\mu_y}{\|\mu_y\|}$ of the object descriptions in this $y$th subpopulation, assuming the size $\|\mu_y\|$ of the $y$th subpopulation is nonzero. Everywhere here, $y$ and $z$ are in the set $[k]$. Thus, $\De$ is a summary characteristic of the pairwise differences between the $k$ subpopulations, which takes into account both of the two just mentioned kinds of information. 

In the input-output model, the $\|\mu_y\|$'s are interpreted as the prior probabilities of the possible input signals $y\in[k]$ -- whereas, for each $y\in[k]$, the (conditional) probability distribution $\frac{\mu_y}{\|\mu_y\|}$ is the distribution of the output signal corresponding to the given input $y$. Thus, here $\De$ is a summary characteristic of the pairwise differences between the $k$ sets of possible outputs corresponding to the $k$ possible inputs. 


\begin{remark}\label{rem:De}
By \eqref{eq:De},  
\begin{equation*}
	0\le\De\le\sum_{1\le y<z\le k}(\|\mu_y\|+\|\mu_z\|)=(k-1)\sum_1^k\|\mu_y\|=k-1.
\end{equation*}
Moreover, the extreme values $0$ and $k-1$ of $\De$ are attained, respectively,  when the measures $\mu_y$ are the same for all $y\in[k]$ and when these measures are pairwise mutually singular.
\end{remark}

The main result of this paper provides the following upper and lower bounds on the smallest possible misclassification probability $p_*$ in terms of $\De$: 
\begin{theorem}\label{th:} 
One has  
\begin{equation}\label{eq:}
	L(\De)\le p_*\le U(\De)\le U_\simpl(\De),  
\end{equation}
where 
\begin{equation}\label{eq:L,U}
\begin{aligned}
	L(\De)&:=L_k(\De):=1-\frac{1+\De}k,\\  
	U(\De)&:=U_k(\De):=1-\frac{k+1+\De-2\ceil\De}{(k-\ceil\De)(k+1-\ceil\De)},\\
	U_\simpl(\De)&:=U_{k;\simpl}(\De):=1-\frac1{k-\De}, 
\end{aligned}	
\end{equation}
and $\ceil\cdot$ is the ceiling function, so that $\ceil\De$ is the smallest integer that is no less than $\De$. 
\end{theorem}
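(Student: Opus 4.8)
The plan is to transfer both bounds to pointwise inequalities on the set of probability vectors $r=(r_1,\dots,r_k)$ (i.e.\ $r_i\ge0$ and $\sum_i r_i=1$) and then integrate against $\mu$ using Jensen's inequality. Write $m(r):=\max_i r_i$ and $d(r):=\sum_{1\le i<j\le k}|r_i-r_j|$, and put $\rho:=(\rho_1,\dots,\rho_k)$; since $\sum_y\rho_y=1$ $\mu$-a.e., $\rho(x)$ is a probability vector for $\mu$-a.e.\ $x$, and by Proposition~\ref{prop:f_*} and \eqref{eq:De} we have $1-p_*=\int_S m(\rho)\,\d\mu$ and $\De=\int_S d(\rho)\,\d\mu$, with $d(\rho)\in[0,k-1]$ $\mu$-a.e.\ (as in Remark~\ref{rem:De}). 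Let $g:=1-U=1-U_k$. A short computation shows that on each interval $[\ell,\ell+1]$, $\ell\in\{0,\dots,k-1\}$, the function $g$ is affine with slope $1/\bigl((k-\ell-1)(k-\ell)\bigr)$ and $g(\ell)=1/(k-\ell)$; hence $g$ is continuous, convex and piecewise linear on $[0,k-1]$.

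For the lower bound the pointwise fact is $m(r)\le\frac{1+d(r)}k$: if $r_1=m(r)$ is a largest coordinate, then $d(r)\ge\sum_{j\ge2}(r_1-r_j)=k\,r_1-1$. Integrating against $\mu$ gives $1-p_*\le\frac{1+\De}k$, i.e.\ $p_*\ge L(\De)$.

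The core of the upper bound is the pointwise inequality $m(r)\ge g(d(r))$ for every probability vector $r$. Since $m$ and $d$ are symmetric, assume $r_1\ge\dots\ge r_k\ge0$, so $m(r)=r_1$; fix $\delta:=d(r)$ and minimize the \emph{linear} functional $r\mapsto r_1$ over the bounded polytope $P_\delta:=\{r:r_1\ge\dots\ge r_k\ge0,\ \sum_i r_i=1,\ d(r)=\delta\}$. The minimum is attained at a vertex, and at a vertex at most two of the $k$ slacks $r_1-r_2,\dots,r_{k-1}-r_k,r_k$ are positive, so the vertex has the form $r=(\underbrace{a,\dots,a}_{j},\underbrace{b,\dots,b}_{l-j},0,\dots,0)$ with $a\ge b\ge0$ and $1\le j\le l\le k$. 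Using $\sum_i r_i=1$ and $d(r)=\delta$ (recall $d(r)=\sum_i(k+1-2i)r_i$ for sorted $r$) one solves $r_1=a=\frac{\delta+j+l-k}{jl}$, valid for $\delta\in[k-l,\,k-j]$. The affine function $\delta\mapsto\frac{\delta+j+l-k}{jl}$ equals $\frac1l=g(k-l)$ at $\delta=k-l$ and $\frac1j=g(k-j)$ at $\delta=k-j$; since $g$ is convex it lies weakly below this chord on $[k-l,k-j]$, so this vertex value is $\ge g(\delta)$. As this holds for every vertex of $P_\delta$, we get $r_1\ge g(\delta)$, proving $m(r)\ge g(d(r))$. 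I expect the vertex description of $P_\delta$, and the computation of $a$ together with its exact range of validity, to be the main obstacle.

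Finally, convexity of $g$ and Jensen's inequality give $1-p_*=\int_S m(\rho)\,\d\mu\ge\int_S g(d(\rho))\,\d\mu\ge g\bigl(\int_S d(\rho)\,\d\mu\bigr)=g(\De)$, i.e.\ $p_*\le1-g(\De)=U(\De)$. For the last inequality, write $U_\simpl=1-h$ with $h(\delta):=\frac1{k-\delta}$, which is convex on $[0,k-1]$; since $g(\ell)=h(\ell)$ for every integer $\ell\in\{0,\dots,k-1\}$ and $g$ is linear between consecutive integers, $g$ is the piecewise-linear interpolant of $h$ and hence $g\ge h$ on $[0,k-1]$. Therefore $1-g(\De)\le1-h(\De)$, that is, $U(\De)\le U_\simpl(\De)$ (which in particular re-proves $p_*\le U_\simpl(\De)$).
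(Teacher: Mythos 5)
Your proof is correct, and its overall skeleton is the same as the paper's: reduce to a pointwise inequality between $\max_i r_i$ and $\sum_{i<j}|r_i-r_j|$ for probability vectors, then integrate, using affineness of $L$ for the lower bound and Jensen with the concavity of $U$ for the upper bound, and finally observe that $U$ is the piecewise-linear interpolant of the convex function $1-U_\simpl$ at the integers. Where you genuinely diverge is in the proof of the central pointwise inequality $\max_i r_i\ge g(d(r))$. The paper's Lemma~\ref{lem:} sorts $a_1\ge\dots\ge a_k$, sets $h_i:=a_i-a_{i+1}$ and $p_i:=ih_i$, so that $(p_i)$ becomes a probability vector on $[k]$ with mean $k-d$ while $a_1=\sum_i\frac1i\,p_i$; both bounds then drop out by comparing the convex function $i\mapsto\frac1i$ with its secant through the two integers adjacent to $k-d$ (minimum) and through $1$ and $k$ (maximum), and the extremal configurations \eqref{eq:a_i-low}, \eqref{eq:a_i-hi} needed for Proposition~\ref{prop:exact} come out for free. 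You instead fix $\delta=d(r)$ and minimize $r_1$ over the polytope of sorted probability vectors with $d(r)=\delta$, identify its vertices as two-level vectors, solve for $r_1=\frac{\delta+j+l-k}{jl}$ on its validity range $[k-l,k-j]$, and beat $g$ by convexity; I checked these computations (including the endpoint values $\frac1l=g(k-l)$ and $\frac1j=g(k-j)$) and they are right. The trade-off is that your vertex enumeration costs more algebra than the paper's change of variables, but your one-line pointwise lower bound $d(r)\ge k\,\max_i r_i-1$ is more direct than the paper's derivation of $L$ from the same machinery; also, your route makes the Jensen step (which the paper leaves implicit behind ``follows immediately'') fully explicit.
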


Theorem~\ref{th:} is complemented by 

\begin{proposition}\label{prop:exact}
For each possible value of $\De$ in the interval $[0,k-1]$, the lower and upper bounds $L(\De)$ and $U(\De)$ on $p_*$ are exact: For each $\De\in[0,k-1]$, there are r.v.'s $X$ and $Y$ as described in the beginning of this paper for which one has the equality $p_*=L(\De)$; similarly, with $U(\De)$ in place of $L(\De)$. More specifically, 
the first (respectively, second) inequality in \eqref{eq:} turns into the equality if and only if there is a set $S_0\in\Si$ such that $\mu(S_0)=0$ and for each $x\in S\setminus S_0$ the values $\rho_1(x),\dots,\rho_k(x)$ constitute a permutation of numbers $a_1,\dots,a_k$ as in \eqref{eq:a_i-low} (respectively, in \eqref{eq:a_i-hi}) with $d=\De$. 
The simpler/simplified upper bound $U_\simpl(\De)$ is exact only for the integral values of $\De$. 
\end{proposition}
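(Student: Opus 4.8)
The plan is to reduce everything to a finite-dimensional extremal problem over the probability simplex. For a probability vector $a=(a_1,\dots,a_k)$ (meaning $a_y\ge0$ and $\sum_y a_y=1$) put $m(a):=\max_y a_y$ and $d(a):=\sum_{1\le y<z\le k}|a_y-a_z|$, and write $\rho(x):=(\rho_1(x),\dots,\rho_k(x))$. Since $\rho_1(x)+\dots+\rho_k(x)=1$ for $\mu$-a.e.\ $x$, Proposition~\ref{prop:f_*} and \eqref{eq:De} give
\[
	1-p_*=\int_S m(\rho(x))\,\mu(\d x),\qquad \De=\int_S d(\rho(x))\,\mu(\d x).
\]
Taking $X$ to be constant (so that $S$ may be a singleton and $\rho_y\equiv q_y:=\P(Y=y)$) these become $1-p_*=m(q)$ and $\De=d(q)$ for $q=(q_1,\dots,q_k)$; hence, to prove the ``exactness'' (attainability) claims, it suffices to exhibit, for each $\De\in[0,k-1]$, one probability vector with $m=1-L(\De)$ and $d=\De$ and one with $m=1-U(\De)$ and $d=\De$, and these are precisely the vectors of \eqref{eq:a_i-low} and of \eqref{eq:a_i-hi} with $d=\De$.

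For the lower bound I would first prove the pointwise estimate $m(a)\le\frac1k\bigl(1+d(a)\bigr)$, with equality if and only if all coordinates of $a$ except a largest one coincide. Writing $a_{(1)}\ge\dots\ge a_{(k)}$ one has $d(a)=\sum_{i=1}^k(k+1-2i)\,a_{(i)}$, so, using $\sum_i a_{(i)}=1$, the inequality amounts to $\sum_{i=2}^k(k+2-2i)\,a_{(i)}\ge0$; pairing the index $i$ with $k+2-i$ rewrites the left side as $\sum_{2\le i<k+2-i}(k+2-2i)\bigl(a_{(i)}-a_{(k+2-i)}\bigr)$, plus a vanishing middle term when $k$ is even, which is $\ge0$ by monotonicity, with equality precisely when $a_{(2)}=\dots=a_{(k)}$. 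Integrating this against $\mu$ recovers $p_*\ge L(\De)$, and shows that equality holds iff, $\mu$-a.e., $\rho(x)$ has all entries but a largest one equal, i.e.\ $\rho(x)$ is a permutation of a vector as in \eqref{eq:a_i-low}. The value $L(\De)$ itself is attained by the constant-$X$ construction in which $q$ is the vector of \eqref{eq:a_i-low} with $d=\De$.

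For the upper bound, set $\Phi(d):=\min\{m(a):a\text{ a probability vector, }d(a)=d\}$ for $d\in[0,k-1]$. On each chamber $a_{(1)}\ge\dots\ge a_{(k)}$ the feasible set is a polytope on which $m$ and $d$ are linear, so this minimum sits at a vertex; inspecting the vertices (consecutive blocks of equal coordinates, possibly with a terminal block of zeros) one finds that a minimizer has $\ceil{d}-1$ zero coordinates, $k-\ceil{d}$ coordinates equal to the common maximal value, and one intermediate coordinate — i.e.\ a vector as in \eqref{eq:a_i-hi} — so $\Phi(d)=1-U(d)$, with $\Phi(n)=\frac1{k-n}$ at the integers. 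On $[n-1,n]$ the function $\Phi$ is affine with slope $\frac1{(k-n)(k-n+1)}$, which increases with $n$, so $\Phi$ is convex. Since $m(a)\ge\Phi(d(a))$ for every $a$, we get, using also convexity of $\Phi$ and Jensen's inequality,
\[
	1-p_*=\int_S m(\rho(x))\,\mu(\d x)\ \ge\ \Phi\Bigl(\int_S d(\rho(x))\,\mu(\d x)\Bigr)=\Phi(\De)=1-U(\De),
\]
i.e.\ $p_*\le U(\De)$. Equality forces, $\mu$-a.e., that $\rho(x)$ attains the minimum defining $\Phi$ at its own value $d(\rho(x))$ — hence is a permutation of a vector as in \eqref{eq:a_i-hi} — and, since $\Phi$ is strictly convex at each integer node, that the numbers $d(\rho(x))$ lie $\mu$-a.e.\ in the single affine piece $[\ceil{\De}-1,\ceil{\De}]$; together with $\int_S d(\rho(x))\,\mu(\d x)=\De$ this is the stated characterization, and $U(\De)$ is attained by the constant-$X$ construction with $q$ the vector of \eqref{eq:a_i-hi} with $d=\De$.

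Finally, $U_\simpl(\De)=1-\frac1{k-\De}$ while $U(\De)=1-\Phi(\De)$, and $\Phi$ is exactly the piecewise-affine interpolant through the points $\bigl(n,\tfrac1{k-n}\bigr)$, $n=0,\dots,k-1$, of the function $d\mapsto\frac1{k-d}$, which is strictly convex on $[0,k-1]$. Hence $\Phi(\De)\ge\frac1{k-\De}$ with equality iff $\De$ is an integer; equivalently $U(\De)\le U_\simpl(\De)$ with equality exactly at integer $\De$. As $p_*\le U(\De)$ always, it follows that $p_*<U_\simpl(\De)$ whenever $\De\notin\Z$, so $U_\simpl(\De)$ is not attained there, while at integer $\De$ it coincides with $U(\De)$, which is attained. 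The one genuinely nonroutine step is the explicit determination, and the convexity, of the function $\Phi$ defined by the above linear program; granting that, the exactness statements and the two ``if and only if'' characterizations follow by reading off the equality cases of the inequalities established above.
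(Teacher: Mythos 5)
Your overall strategy is sound and, for the lower bound, your argument is complete and genuinely different from the paper's: the paper proves both bounds at once via the substitution $h_i=a_{(i)}-a_{(i+1)}$, $p_i=ih_i$, which turns the problem into optimizing $\sum_i p_i/i$ subject to $\sum_i p_i=1$, $\sum_i ip_i=k-d$, $p_i\ge0$, and then compares the convex function $i\mapsto 1/i$ with suitable chords; your pairing identity $\sum_{i=2}^k(k+2-2i)a_{(i)}\ge0$ gives the pointwise inequality $m(a)\le(1+d(a))/k$ directly and is a nice self-contained alternative. Your passage from the pointwise statements to the integrated ones (affineness of $L$; convexity of $\Phi=1-U$ plus Jensen) is correct and is essentially what the paper leaves implicit when it says the proposition ``follows immediately'' from its Lemma. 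In fact your equality analysis is more careful than the statement being proved: as you in effect observe, equality in $p_*\ge L(\De)$ only forces each $\rho(x)$ to be a permutation of a vector of the form \eqref{eq:a_i-low} for \emph{its own} value $d=d(\rho(x))$, and equality in $p_*\le U(\De)$ only forces the values $d(\rho(x))$ to lie a.e.\ in the single affine piece $[\ceil{\De}-1,\ceil{\De}]$; neither forces $d(\rho(x))=\De$ a.e., so your closing remark that this ``is the stated characterization'' is not accurate --- though the discrepancy is with the proposition as worded rather than with your mathematics.

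The genuine gap is the one you flag yourself: the determination of $\Phi(d)=\min\{m(a):d(a)=d\}$. Saying that ``inspecting the vertices \dots\ one finds'' a minimizer with $\ceil{d}-1$ zeros, $k-\ceil{d}$ maximal coordinates and one intermediate coordinate asserts the answer rather than proving it, and this is precisely the nontrivial content of the upper bound. To close it along your own lines: the vertices of the chamber $\{a_1\ge\dots\ge a_k\ge0,\ \sum_i a_i=1\}$ are the vectors $v_j$ with $j$ leading coordinates equal to $1/j$, for which $d(v_j)=k-j$ and $m(v_j)=1/j$; the vertices of the slice $\{d(a)=d\}$ therefore lie on edges $[v_j,v_{j'}]$, so minimizing $m$ at fixed $d$ amounts to taking the lower convex envelope of the points $(k-j,1/j)=(n,1/(k-n))$, which by convexity of $n\mapsto 1/(k-n)$ is the chord between consecutive points --- exactly the paper's computation with $g(i)=1/i$ and the affine function $g^U$ interpolating $g$ at $i=k-\ceil{d}$ and $i=k-\ceil{d}+1$. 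Until that (or the paper's substitution argument) is written out, the upper-bound half of your proposal is a correct plan rather than a proof.
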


\begin{remark}\label{rem:bounds}
In view of Remark~\ref{rem:De}, the functions $L$, $U$, and $U_\simpl$, introduced in
Theorem~\ref{th:}, are well defined on the interval $[0,k-1]$. Moreover, $U(\De)$ is the linear
interpolation of $U_\simpl(\De)$ over the possible integral values $0,\dots,k-1$ of $\De$. Thus,
each of the functions $L$, $U$, and $U_\simpl$ is concave and strictly decreasing (from $1-\frac1k$ 
to $0$) on the interval $[0; k-1]$; moreover, the function $L$ is obviously affine.
We see that, the greater is the characteristic $\De$ of the pairwise differences between the $k$ subpopulations, the smaller are the lower and upper bounds $L(\De)$, $U(\De)$, and $U_\simpl(\De)$ on the misclassification probability $p_*$. Of course, this quite corresponds to what should be expected of good bounds on $p_*$. 
It also follows that one always has 
\begin{equation}\label{eq:p_*}
	0\le p_*\le1-\frac1k,  
\end{equation}
and the extreme values $0$ and $1-\frac1k$ of the misclassification probability $p_*$ are attained when, respectively, $\De=k-1$ and $\De=0$. 
The bounds $L$, $U$, and $U_\simpl$ are illustrated in Figure~\ref{fig:bounds(De)}.
\end{remark}

\begin{figure}[h]
	\centering
		\includegraphics[width=0.60\textwidth]{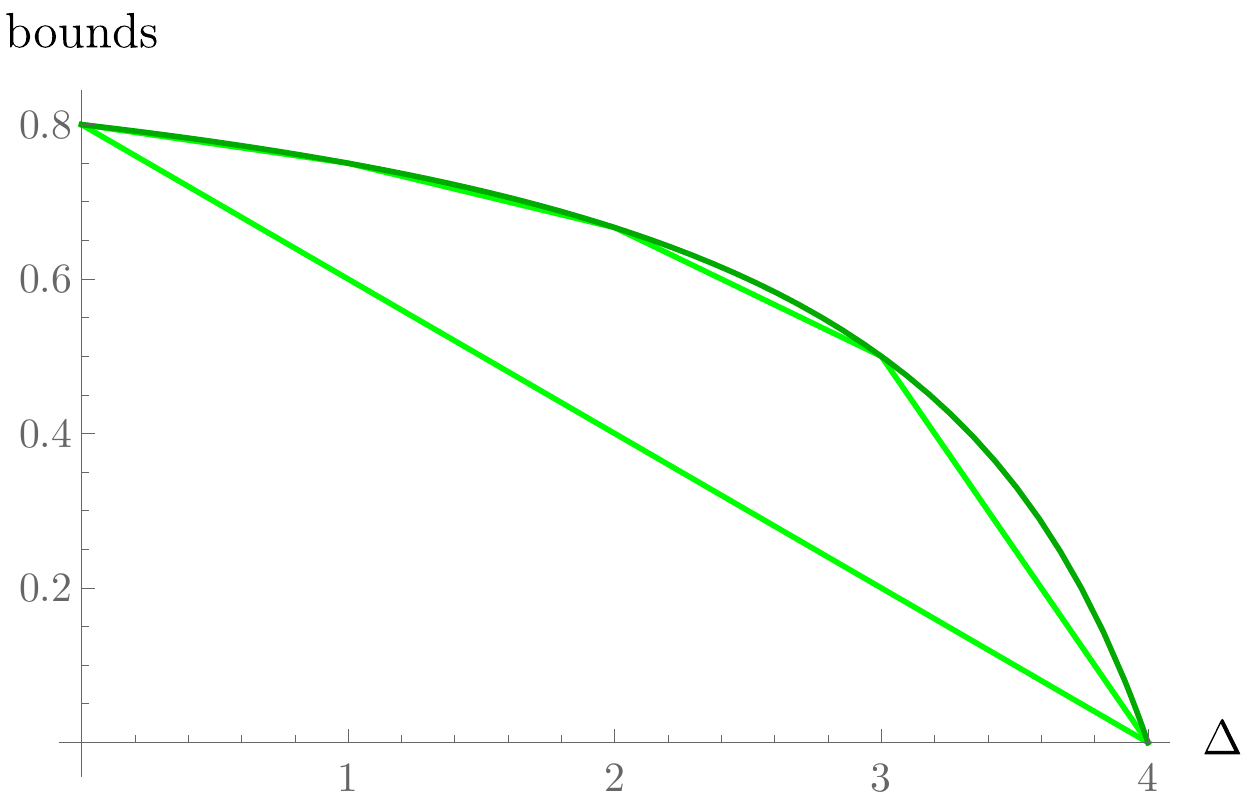}
	\caption{Graphs of the bounds $L$ (green), $U$ (green), and $U_\simpl$ (dark green) for $k=5$.}
	\label{fig:bounds(De)}
\end{figure}

Feder and Merhav \cite{fed-merhav} obtained the following exact upper and lower bounds of the optimal misclassification probability in terms of the conditional entropy $H$: 
\begin{equation*}
	L_\FM(H)\le p_*\le U_\FM(H), 
\end{equation*}
where 
\begin{equation}\label{eq:H}
	H:=H(Y|X):=-\E\sum_{y=1}^k \rho_y(X)\ln\rho_y(X)
	=-\int_S\mu(\dd x)\sum_{y=1}^k \rho_y(x)\ln\rho_y(x), 
\end{equation}
\begin{equation}\label{eq:L_FN}
	L_\FM(H):=\Phi^{-1}(H),\quad \Phi(p):=p\ln(k-1)+h_2(p),\quad h_2(p):=-p\ln p-(1-p)\ln(1-p) 
\end{equation}
for $p\in(0,1)$, $h_2(0):=0$, $h_2(1):=0$, 
\begin{equation}\label{eq:U_FN}
	U_\FM(H):=\frac{e(H)-1}{e(H)}+\frac1{e(H) (e(H)+1)} \, 
	\frac{H-\ln e(H)}{\ln (1+1/e(H))}, 
\end{equation}
and
\begin{equation}\label{eq:e(H)}
	e(H):=\ceil{e^H}-1. 
\end{equation}
Note that $\Phi(p)$ strictly and continuously increases from $0$ to $\ln k$ as $p$ increases from $0$ to $1-\frac1k$. Therefore and because all the values of the conditional entropy $H$ lie between $0$ and $\ln k$, the expression $\Phi^{-1}(H)$ is well defined, and its values lie between $0$ and $1-\frac1k$ -- which is in accordance with \eqref{eq:p_*}.  

Throughout this paper, we use only natural, base-$e$ logarithms. In \cite{fed-merhav}, the bounds are stated in terms of binary, base-$2$ logarithms. To rewrite $L_\FM(H)$ and $U_\FM(H)$ in terms of binary logarithms, replace all the instances of $\ln=\log_e$ in \eqref{eq:H}--\eqref{eq:U_FN} by $\log_2$ and, respectively, replace $e^H$ in \eqref{eq:e(H)} by $2^H$. An advantage of using natural logarithms is that then the expressions for the corresponding derivatives, used in our proofs, are a bit simpler; also, $\ln$ is a bit shorter in writing than $\log_2$ or even $\log$. 

Note also that, in the notation in \cite{fed-merhav}, the roles of $X$ and $Y$ are reversed: there, $X$ denotes the input and $Y$ the output. Our notation in this paper is in accordance with the standard convention in machine learning; cf.\ e.g.\ \cite{massart-nedelec,pin-kontor}. 

Let us compare, in detail, our ``$\De$-bounds'' $L(\De)$, $U(\De)$, and $U_\simpl(\De)$ with the ``$H$-bounds'' $L_\FM(H)$ and $U_\FM(H)$. We shall be making the comparisons only in the ``pure'' settings, when the set $\{\rho_1(x),\dots,\rho_k(x)\}$ is the same for all $x\in S$, that is, when for each $x\in S$ the $k$-tuple $(\rho_1(x),\dots,\rho_k(x))$ is a permutation of one and the same $k$-tuple $(a_1,\dots,a_k)$ (of nonnegative real numbers $a_1,\dots,a_k$ such that $a_1+\dots+a_k=1$). A reason for doing so is that one may expect the comparisons to be of greater contrast in the ``pure'' settings than in ``mixed'', non-``pure'' ones. Thus, focusing on ``pure'' settings will likely allow us to see the differences between the ``$\De$-bounds'' and the ``$H$-bounds'' more clearly, while taking less time and effort. 

We shall see that, even though the ``$H$-bounds'' $L_\FM(H)$ and $U_\FM(H)$ and the ``$\De$-bounds'' $L(\De)$ and $U(\De)$ are exact in terms of $H$ and $\De$, respectively, they have rather different properties. 

\begin{remark}\label{rem:comp}
Typically, the lower $H$-bound $L_\FM(H)$ on $p_*$ appears to be better (that is, larger) than the lower $\De$-bound $L(\De)$, whereas the upper $H$-bound $U_\FM(H)$ on $p_*$ appears to be worse (that is, larger) than the upper $\De$-bound $U(\De)$ and even its simplified but less accurate version $U_\simpl(\De)$. 

However, in some rather exceptional cases these relations are reversed. 

In particular, if the best possible misclassification probability $p_*$ is large enough, then the lower $\De$-bound $L(\De)$ may be better than the lower $H$-bound $L_\FM(H)$, for each $k\ge3$. 

On the other hand, if $k$ is large enough and $p_*$ is small enough, then the upper $\De$-bound $U(\De)$ may be worse than the upper $H$-bound $U_\FM(H)$. However, I have not been able to find cases with $U(\De)$ (or even $U_\simpl(\De)$) worse than $U_\FM(H)$ when there are at most $k=9$ classes. 
\end{remark}

More specifically, we have the following propositions. (As usual, $\ii{\cdot}$ will denote the indicator function.) 

\begin{proposition}\label{prop:comp-lo}
Suppose that $k\ge3$ and for each $x\in S$ the vector $(\rho_1(x),\dots,\rho_k(x))$ is a permutation of the vector $(a_1,\dots,a_k)$, where 
\begin{equation*}
a_i=\frac1\ell\,\ii{1\le i\le\ell} 	
\end{equation*}
for some natural $\ell\ge2$ in the set $\{k-3,k-2,k-1\}$ and for all $i=1,\dots,k$; one may also allow $\ell=k-4$ if $k\in\{6,7,8,9\}$. 
Then $L(\De)>L_\FM(H)$. 
\end{proposition}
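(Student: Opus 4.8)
The plan is to reduce everything to an explicit, one-parameter comparison. Fix $k\ge3$ and $\ell\in\{2,\dots\}\cap\{k-3,k-2,k-1\}$ (with $\ell=k-4$ also permitted when $k\in\{6,7,8,9\}$), and take the ``pure'' setting with $a_i=\frac1\ell\,\ii{1\le i\le\ell}$. First I would compute the two relevant invariants in closed form. Since $\ell$ of the $\rho_y(x)$ equal $\frac1\ell$ and the remaining $k-\ell$ equal $0$, for a ``pure'' configuration every pairwise difference $|a_i-a_j|$ is either $0$ (both in the support, or both out) or $\frac1\ell$ (one in, one out), so by \eqref{eq:De}, $\De=\frac1\ell\cdot\ell(k-\ell)=k-\ell$; in particular $\De$ is an integer, so $\ceil\De=\De=k-\ell$. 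Likewise, by Proposition~\ref{prop:f_*}, $p_*=1-\int_S\max_y\rho_y(x)\,\mu(\d x)=1-\frac1\ell$, and the conditional entropy from \eqref{eq:H} is $H=-\ell\cdot\frac1\ell\ln\frac1\ell=\ln\ell$, whence $e^H=\ell$ is an integer.

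Next I would evaluate the two lower bounds at these values. For the $\De$-bound, plugging $\De=k-\ell$ into $L$ from \eqref{eq:L,U} gives $L(\De)=1-\frac{1+k-\ell}{k}=\frac{\ell-1}{k}$. For the $H$-bound, since $H=\ln\ell$ with $\ell$ an integer we have $e(H)=\ceil{e^H}-1=\ell-1$, and $L_\FM(H)=\Phi^{-1}(\ln\ell)$. Here the key observation is that the Feder--Merhav lower bound is exact precisely at the ``balanced'' configurations: the equi-probable vector on $\ell$ atoms has $p_*=1-\frac1\ell$ and $H=\ln\ell$, so $\Phi(1-\frac1\ell)=(1-\frac1\ell)\ln(k-1)+h_2(1-\frac1\ell)$; but this is the value of $\Phi$ for the $k$-class problem, not the $\ell$-class one, so $\Phi(1-\frac1\ell)\ne\ln\ell$ in general and $L_\FM(\ln\ell)=\Phi^{-1}(\ln\ell)$ is strictly less than $1-\frac1\ell$ whenever $\ell<k$. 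More usefully, since $\Phi$ is strictly increasing, the inequality $L(\De)>L_\FM(H)$ is equivalent to $\Phi\!\big(L(\De)\big)>H$, i.e. to
\begin{equation*}
	\Phi\!\Big(\tfrac{\ell-1}{k}\Big)=\tfrac{\ell-1}{k}\ln(k-1)+h_2\!\Big(\tfrac{\ell-1}{k}\Big)>\ln\ell.
\end{equation*}
So the whole proposition collapses to verifying this one scalar inequality for the finitely many admissible pairs $(k,\ell)$, i.e. for $\ell\in\{k-1,k-2,k-3\}$ (plus $k-4$ when $k\le9$), subject to $\ell\ge2$ and $k\ge3$.

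The final step is to dispatch that inequality. For the ``generic'' families $\ell=k-1,k-2,k-3$ I would substitute $\ell=k-j$ ($j\in\{1,2,3\}$) and show $\Phi\big(\frac{k-j-1}{k}\big)-\ln(k-j)>0$ for all $k$ large, by treating $k$ as a continuous variable: expand using $h_2(1-t)=h_2(t)$ and the asymptotics $\ln(k-j)=\ln k+O(1/k)$, $h_2(\frac{j+1}{k})=\frac{j+1}{k}\ln k+O(\frac{\ln k}{k}\cdot\frac1k)$ is too crude — rather, I would write the left side as $\frac{k-j-1}{k}\ln(k-1)+h_2(\frac{j+1}{k})-\ln(k-j)$ and note $\frac{k-j-1}{k}\ln(k-1)\to\ln k$ while the correction terms are $O((\log k)/k)$ with a sign that can be pinned down by a short monotonicity argument (derivative in $k$ of the difference is eventually positive); this leaves only a bounded initial range of $k$, which together with the sporadic cases $\ell=k-4$, $k\in\{6,7,8,9\}$ is a finite numerical check. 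I expect the main obstacle to be precisely this last step: the inequality $\Phi\big(\frac{\ell-1}{k}\big)>\ln\ell$ is comfortably true for large $k$ but tight for small $k$ (which is exactly why the hypothesis restricts $\ell$ to be within $3$, or exceptionally $4$, of $k$, and forbids $k=2$), so the proof must combine a clean asymptotic bound with a careful finite verification rather than a single slick estimate. A convenient device to make the finite part rigorous is to use the convexity of $h_2$ to get a piecewise-linear lower bound on $h_2\big(\frac{j+1}{k}\big)$ that still beats $\ln(k-j)-\frac{k-j-1}{k}\ln(k-1)$ on the relevant grid.
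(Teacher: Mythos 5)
Your reduction is exactly the paper's: in this ``pure'' setting one gets $\De=k-\ell$, $H=\ln\ell$, $L(\De)=\frac{\ell-1}{k}$, and by the strict monotonicity of $\Phi$ the claim becomes the scalar inequality $d_k(\ell):=\Phi\big(\frac{\ell-1}{k}\big)-\ln\ell>0$ for the admissible pairs $(k,\ell)$. Up to that point there is nothing to object to.

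The gap is in how you propose to finish. Writing $\ell=k-j$, one finds
\begin{equation*}
\Phi\Big(\frac{k-j-1}{k}\Big)-\ln(k-j)=\frac{2j-(j+1)\ln(j+1)}{k}+O(k^{-2}),
\end{equation*}
whose leading coefficient is positive for $j\in\{1,2,3\}$ (for $j=3$ it equals $6-8\ln2\approx0.455$) and negative for $j=4$ (it equals $8-5\ln5\approx-0.047$, which is precisely why $\ell=k-4$ is admitted only for $k\le9$). Two consequences. First, your heuristic is backwards: the inequality is not ``comfortably true for large $k$ and tight for small $k$'' --- the margin $d_k(k-j)$ is largest for small $k$ and decays to $0$ like $1/k$, so the delicate regime is $k\to\infty$, and the restriction $j\le3$ is forced by large-$k$ behavior, not small-$k$. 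Second, and more seriously, your proposed mechanism fails: since $d_k(k-j)$ is positive and tends to $0$, its derivative in $k$ is eventually \emph{negative}, not positive, so the monotonicity step as you state it proves nothing (a function that is eventually increasing with limit $0$ would have to be eventually negative). Without a correct large-$k$ argument that produces an explicit threshold, the ``bounded initial range'' you defer to a numerical check is not actually bounded, and the proof is incomplete. The paper closes this differently and cleanly: $d_k(\ell)$ is strictly concave in $\ell$ on $[k-3,k]$ (for $\ell>2$) and vanishes at $\ell=k$, so positivity on the whole range reduces to positivity at the single endpoint $\ell=k-3$; then $\tilde d(k):=k\,d_k(k-3)$ is shown to be concave in $k\ge6$ with $\tilde d(6)>0$ and $\tilde d(k)\to6-8\ln2>0$ as $k\to\infty$, which settles all $k\ge6$ at once, leaving only the pairs with $k\in\{3,4,5\}$ and the sporadic cases $\ell=k-4$, $k\in\{6,7,8,9\}$ as a genuinely finite direct verification.
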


\begin{proposition}\label{prop:comp-hi}
Fix any $\nu\in(1,\infty)$. 
Suppose that for each $x\in S$ the vector $(\rho_1(x),\dots,\rho_k(x))$ is a permutation of the vector $(a_1,\dots,a_k)$, where $k>\nu$ and 
\begin{equation*}
a_i=\Big(1-\frac{\nu-1}k\Big)\,\ii{i=1}+\frac{\nu-1}{k(k-1)}\,\ii{2\le i\le k} 	
\end{equation*}
for all $i=1,\dots,k$. 
Then $U(\De)>U_\FM(H)$ for all large enough $k$ (depending on the value of $\nu$). 
\end{proposition}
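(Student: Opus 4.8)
The plan is to compute both sides asymptotically as $k\to\infty$ with $\nu$ fixed, and show that $U(\De)-U_\FM(H)$ is eventually positive. Since the $\rho$'s take only the two values $a_1=1-\frac{\nu-1}k$ and $a_2=\dots=a_k=\frac{\nu-1}{k(k-1)}$ in a fixed proportion, this is a ``pure'' setting, so $\De$ and $H$ are deterministic functions of $k$ and $\nu$ that I will compute directly. First I would evaluate $\De$: by \eqref{eq:De}, $\De=\sum_{1\le y<z\le k}|\rho_y-\rho_z|\,\d\mu$, and since in the pure case this sum of integrals equals the sum over pairs $(y,z)$ of $|a_y-a_z|$ weighted appropriately — here, because every coordinate permutation occurs and $\int|\rho_y-\rho_z|\,\d\mu$ reduces to the corresponding pairwise gap of the $a_i$'s — one gets $\De=(k-1)(a_1-a_2)=(k-1)\bigl(1-\tfrac{\nu-1}k-\tfrac{\nu-1}{k(k-1)}\bigr)=(k-1)-(\nu-1)\bigl(1-\tfrac1k+\tfrac1{k(k-1)}\bigr)$, so that $k-1-\De\to\nu-1$ and more precisely $k-1-\De=(\nu-1)(1+o(1))$. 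Consequently $\ceil\De=k-1$ for all large $k$ (since $\De>k-2$ once $\nu<2$, and one can check $\De>k-2$ persists by the exact formula, or argue directly that $k-\De\to1$ forces $\ceil\De\in\{k-1,k\}$ and $\De<k-1$), and plugging $\ceil\De=k-1$ into the formula for $U$ in \eqref{eq:L,U} gives $U(\De)=1-\frac{k+1+\De-2(k-1)}{(k-(k-1))(k+1-(k-1))}=1-\frac{\De-k+3}{2}=\frac{k-1-\De}{2}+ \tfrac12\cdot 0$; recomputing carefully, $U(\De)=1-\tfrac{\De+3-k}{2}=\tfrac{k-1-\De}{2}+\tfrac{?}{}$, and in any case $U(\De)=\tfrac12(k-1-\De)+O(1)\cdot$(something bounded) — the upshot is $U(\De)\to\frac{\nu-1}2$ as $k\to\infty$. (I would double-check this constant against $U_\simpl(\De)=1-\frac1{k-\De}\to 1-\frac1{\nu}$ and against concavity/interpolation from Remark~\ref{rem:bounds}, i.e. $U(\De)$ is the linear interpolant of $U_\simpl$ on integers, so as $\De\uparrow$ toward $k-1$ the value $U(\De)$ interpolates between $U_\simpl(k-2)=1-\frac1{k-(k-2)}=\frac12$ and $U_\simpl(k-1)=0$, giving $U(\De)=\tfrac12\bigl((k-1)-\De\bigr)\to\tfrac12(\nu-1)$.)

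Next I would compute $H$ from \eqref{eq:H}. In the pure case $H=-\sum_{y=1}^k a_y\ln a_y$ with the $a_y$'s as above: $H=-a_1\ln a_1-(k-1)a_2\ln a_2=-\bigl(1-\tfrac{\nu-1}k\bigr)\ln\bigl(1-\tfrac{\nu-1}k\bigr)-\tfrac{\nu-1}k\ln\tfrac{\nu-1}{k(k-1)}$. Expanding: the first term is $\tfrac{\nu-1}k+O(k^{-2})$, and the second is $\tfrac{\nu-1}k\bigl(\ln\tfrac{k(k-1)}{\nu-1}\bigr)=\tfrac{\nu-1}k\bigl(2\ln k+O(1)\bigr)$, so $H=\tfrac{2(\nu-1)\ln k}{k}\,(1+o(1))\to0$. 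Then $e^H=1+\tfrac{2(\nu-1)\ln k}{k}(1+o(1))$, so $\ceil{e^H}=2$ for all large $k$, hence $e(H)=\ceil{e^H}-1=1$. Substituting $e(H)=1$ into \eqref{eq:U_FN} gives $U_\FM(H)=\frac{1-1}{1}+\frac1{1\cdot2}\cdot\frac{H-\ln 1}{\ln(1+1)}=\frac{H}{2\ln2}$. Therefore $U_\FM(H)=\frac{H}{2\ln2}\to0$ as $k\to\infty$.

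Combining the two computations, $U(\De)\to\frac{\nu-1}2>0$ while $U_\FM(H)\to0$, so there is $k_0=k_0(\nu)$ such that $U(\De)>U_\FM(H)$ for all $k\ge k_0$ with $k>\nu$. This proves the proposition. The only genuinely delicate points, which I would treat carefully rather than asymptotically, are (i) justifying that in the pure setting $\De$ really equals $(k-1)(a_1-a_2)$ — this follows because each unordered pair $\{y,z\}$ contributes $\int_S|\rho_y-\rho_z|\,\d\mu$, and averaging over the uniform distribution of permutations shows every such integral equals the average of $|a_i-a_j|$ over ordered pairs $i\ne j$; summing over the $\binom k2$ pairs $\{y,z\}$ recovers $\sum_{i<j}|a_i-a_j|=(k-1)(a_1-a_2)$ since here all gaps among $a_2,\dots,a_k$ vanish — and (ii) pinning down $\ceil\De$ and $\ceil{e^H}$ exactly for large $k$, i.e. verifying the strict inequalities $k-2<\De<k-1$ and $1<e^H<2$ eventually, which is immediate from the closed-form expressions above together with $1<\nu<2$ handled first and then general $\nu\in(1,\infty)$ noting that only the regime near $\De=k-1$ matters and $k$ may be taken as large as needed. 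That bookkeeping — not the asymptotics — is the main obstacle, but it is entirely routine given the explicit formulas.
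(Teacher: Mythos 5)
Your strategy is the same as the paper's: show that $U_\FM(H)\to0$ while $U(\De)$ stays bounded away from $0$ as $k\to\infty$. The $H$-side is correct: $H\to0$, so eventually $1<e^H<2$, $e(H)=1$, and $U_\FM(H)=H/(2\ln 2)\to0$. The evaluation of $\De$ is also correct in substance --- in the pure setting $\De=\sum_{i<j}|a_i-a_j|$ simply because this symmetric sum is permutation-invariant pointwise in $x$ (no averaging over permutations is needed), and here it equals $(k-1)(a_1-a_2)=k-\nu$ \emph{exactly}.

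But that exact identity is what breaks your next step. You claim $\ceil\De=k-1$ for all large $k$, justified by ``$k-\De\to1$''; in fact $k-\De=\nu$, a fixed number in $(1,\infty)$ that does not tend to $1$, so $\ceil\De=k-\lfloor\nu\rfloor$, which equals $k-1$ only for $\nu\in(1,2)$. Consequently the limit $U(\De)\to\frac{\nu-1}2$ is wrong for $\nu\ge2$ (e.g.\ for $\nu=5/2$ one gets $\ceil\De=k-2$ and $U(\De)=1-\frac{5/2}{6}=\frac7{12}$, not $\frac34$), and your closing remark that ``only the regime near $\De=k-1$ matters'' is precisely the point at which the argument fails for general $\nu$. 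The gap is repairable along your own lines: writing $n:=\lfloor\nu\rfloor$ and $\th:=\nu-n$, for non-integer $\nu$ one finds $U(\De)=1-\frac{n+1-\th}{n(n+1)}$, a positive constant independent of $k$ (equal to $\frac{\nu-1}2$ when $n=1$, and at least $1-\frac1n\ge\frac12$ when $n\ge2$; for integer $\nu$ one gets $1-\frac1\nu>0$), and then your conclusion follows. For comparison, the paper avoids the exact evaluation by monotonicity, $U(\De)\ge U(\ceil\De)=U_\simpl(\ceil\De)=1-\frac1{\lfloor\nu\rfloor}$; note, though, that this lower bound degenerates to $0$ when $\lfloor\nu\rfloor=1$, so your exact computation, once corrected, is actually the more robust way to handle $\nu\in(1,2)$.
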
  

Note that in Proposition~\ref{prop:comp-lo} the best possible misclassification probability $p_*=1-\frac1\ell$ is large, especially when $\ell$ is large (and hence so is $k$). In contrast, in  Proposition~\ref{prop:comp-hi} $p_*=\frac{\nu-1}k$ is small for the
large values of $k$, assumed in that proposition. 
Either of these two kinds of situations, especially the second one, may be considered somewhat atypical: it usually should be difficult to make the misclassification probability $p_*$ small when the number $k$ of possible classes is large; on the other hand, when $k$ is not very large, one may hope that the best possible misclassification probability is small enough.  

 
Concerning the case of two classes, we have 

\begin{proposition}\label{prop:k=2}
Suppose that $k=2$. Then $U(\De)=L(\De)=L_\FM(H)=p_*$ for all pairs of r.v.'s $(X,Y)$. So, one can say that the bounds $U(\De)$, $L(\De)$, and $L_\FM(H)$ always perfectly estimate the best possible misclassification probability $p_*$ -- if $k=2$. 

On the other hand, here $U_\FM(H)>U_\simpl(\De)>p_*$ unless there is a set $S_0\in\Si$ such that $\mu(S_0)=0$ and for each $x\in S\setminus S_0$ either $\rho_1(x)=\rho_2(x)=1/2$ or $\{\rho_1(x),\rho_2(x)\}=\{0,1\}$ -- that is, 
the values $\rho_1(x)$ and $\rho_2(x)$ constitute a permutation of the numbers $0$ and $1$. Thus, in the case $k=2$, with the mentioned trivial exceptions, even the simplified upper $\De$-bound $U_\simpl(\De)$ on $p_*$ is strictly better than the upper $H$-bound $U_\FM(H)$, but still $U_\simpl(\De)$ is not a perfect estimate of $p_*$. 
\end{proposition}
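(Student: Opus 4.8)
The plan is to reduce everything to the relation $\rho_1+\rho_2=1$ $\mu$-a.e.\ (valid since $\mu=\mu_1+\mu_2$) together with two elementary one-variable estimates. First compute $p_*$ exactly: by Proposition~\ref{prop:f_*} and $\rho_1+\rho_2=1$,
\begin{equation*}
p_*=1-\int_S\max(\rho_1,\rho_2)\,\d\mu=1-\int_S\tfrac12\bigl(1+|\rho_1-\rho_2|\bigr)\,\d\mu=\tfrac12\Bigl(1-\int_S|\rho_1-\rho_2|\,\d\mu\Bigr)=\tfrac{1-\De}2,
\end{equation*}
the last step by \eqref{eq:De}. Plugging $k=2$ into \eqref{eq:L,U} gives $L(\De)=1-\tfrac{1+\De}2=\tfrac{1-\De}2$; and, treating $\De\in(0,1]$ (so $\ceil\De=1$ and $U(\De)=1-\tfrac{2+1+\De-2}{(2-1)(3-1)}=\tfrac{1-\De}2$) separately from $\De=0$ (so $\ceil\De=0$ and $U(0)=1-\tfrac36=\tfrac12=L(0)$), one obtains $U(\De)=L(\De)=p_*$ for every pair $(X,Y)$.

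For the $H$-bounds, note $\ln(k-1)=0$ when $k=2$, so $\Phi(p)=h_2(p)$ and $\Phi^{-1}$ is the inverse of $h_2$ on $[0,\tfrac12]$; also $H=\int_S h_2(\rho_1)\,\d\mu$ by \eqref{eq:H} and $\rho_2=1-\rho_1$. In the pure setting (to which the comparison with the $H$-bounds refers), $\{\rho_1(x),\rho_2(x)\}\equiv\{a,1-a\}$ with, WLOG, $a\le\tfrac12$; then $h_2(\rho_1)=h_2(a)$ and $\max(\rho_1,\rho_2)=1-a$ $\mu$-a.e., so $H=h_2(a)$, $\De=1-2a$ and $p_*=a$, whence $L_\FM(H)=h_2^{-1}(h_2(a))=a=p_*$. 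For the upper bounds, outside the exceptional configurations one has $a\in(0,\tfrac12)$, so $H\in(0,\ln2)$, $e^H\in(1,2)$, and thus $e(H)=\ceil{e^H}-1=1$; substituting $e(H)=1$ into \eqref{eq:U_FN} collapses it to $U_\FM(H)=\tfrac{H}{2\ln2}=\tfrac{h_2(a)}{2\ln2}$, while $U_\simpl(\De)=1-\tfrac1{1+2a}=\tfrac{2a}{1+2a}$.

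It remains to prove, for $a\in(0,\tfrac12)$, that $U_\FM(H)>U_\simpl(\De)>p_*$. The second inequality is immediate: $U_\simpl(\De)-p_*=\tfrac{2a}{1+2a}-a=\tfrac{a(1-2a)}{1+2a}>0$; and equality in $U_\simpl(\De)\ge p_*$ would force $\De\in\{0,1\}$, i.e.\ $\mu_1=\mu_2$ (so $\rho_1=\rho_2=\tfrac12$ a.e.) or $\mu_1\perp\mu_2$ (so $\rho_1\rho_2=0$ a.e., i.e.\ $\{\rho_1,\rho_2\}=\{0,1\}$ a.e.)---exactly the stated exceptions. The first inequality is equivalent to $\psi(a):=(1+2a)h_2(a)-4a\ln2>0$ on $(0,\tfrac12)$; since $\psi$ is continuous on $[0,\tfrac12]$ with $\psi(0)=\psi(\tfrac12)=0$, it suffices to show $\psi$ is strictly concave on $(0,\tfrac12)$. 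A short computation gives $\psi''(a)=4\ln\tfrac{1-a}a-\tfrac{1+2a}{a(1-a)}$, so the goal becomes $4a(1-a)\ln\tfrac{1-a}a<1+2a$. Here I would use $\ln t\le\sqrt t-1/\sqrt t$ for $t\ge1$ (equivalently $s\le\sinh s$ for $s\ge0$) with $t=\tfrac{1-a}a$, which bounds the left side by $4\sqrt{a(1-a)}\,(1-2a)$; writing $v:=1-2a\in(0,1)$, this equals $2\sqrt{v^2(1-v^2)}\le 2\cdot\tfrac12=1<2-v=1+2a$, which finishes the argument.

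The only place where genuine work is needed is the strict inequality $U_\FM(H)>U_\simpl(\De)$, i.e.\ $\psi>0$ on $(0,\tfrac12)$; I expect the route above---reducing it to $\psi''<0$ and then to the clean bound $\ln t\le\sqrt t-1/\sqrt t$---to be the shortest, though $4a(1-a)\ln\tfrac{1-a}a<1+2a$ could instead be verified by a direct extremum analysis of its left-hand side. Everything else (the value of $p_*$, the coincidence $L(\De)=U(\De)$, the degenerate values $e(H)\in\{0,1\}$, and the identification of the exceptional configurations) is routine bookkeeping.
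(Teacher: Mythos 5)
Your proposal is correct and follows essentially the same route as the paper: reduce to the pure two-point setting $\{\rho_1,\rho_2\}=\{p,1-p\}$, read off $U(\De)=L(\De)=L_\FM(H)=p_*$ and $U_\simpl(\De)=\frac{2p}{1+2p}>p_*$ explicitly, and prove $U_\FM(H)>U_\simpl(\De)$ by showing the difference is strictly concave on $(0,\tfrac12)$ with zero boundary values. The only (cosmetic) divergence is in verifying concavity: the paper checks the sign of a cubic polynomial arising from $d''(p)$ directly, while you first multiply by $1+2a$ and invoke $\ln t\le\sqrt t-1/\sqrt t$ — both work.
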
 
 
An important case is that of three classes, so that $k=3$. Here, in the ``pure'' setting, for each $x\in S$ the triple $(\rho_1(x),\rho_2(x),\rho_2(x))$ is a permutation of the triple $(1-p,p-\vp,\vp)$, where $p:=p_*\in[0,1-1/3]$ and $1-p\ge p-\vp\ge\vp\ge0$ or, equivalently, $p\in[0,2/3]$ and $(2p-1)_+\le\vp\le p/2$, where $u_+:=\max(0,u)$. Each of the 6 pictures in Figure~\ref{fig:k=3} presents the graphs of the decimal logarithms of the bounds $L(\De)$, $U(\De)$, $U_\simpl(\De)$, $L_\FM(H)$, and $U_\FM(H)$ as functions of $\vp\in[(2p-1)_+,p/2]$ with the misclassification probability $p=p_*$ taking a fixed value in the set $\{0.01,0.1,0.3,0.5,0.6,0.64\}$. 
We see that in all these cases the upper $\De$-bound $U(\De)$ and even its simplified (but worse) version $U_\simpl(\De)$ are better than the upper $H$-bound $U_\FM(H)$, over the entire range of values of $\vp$. For small values of the best possible misclassification probability $p_*$, the lower $H$-bound $L_\FM(H)$ is significantly better than $L(\De)$ over all values of $\vp$; however, this comparison is reversed if $p_*$ is large enough but $\vp$ is small enough (especially in the case $p_*=0.5$).

An interesting series of cases is given by what may be called the binomial model (with a parameter $q\in(0,1)$), in which $k=2^m$ for a natural $m$, and for each $x\in S$ the vector $(\rho_1(x),\dots,\rho_k(x))$ is a permutation of a vector $(a_1,\dots,a_k)$, where each $a_i$ is of the form $(1-q)^jq^{m-j}$ for some $j\in\{0,\dots,m\}$, and the multiplicity of the form $(1-q)^jq^{m-j}$ among the $a_i$'s is $\binom mj$ for each $j\in\{0,\dots,m\}$. Clearly then, all the $a_i$'s are nonnegative, and $a_1+\dots+a_k=\sum_{j=0}^m\binom mj (1-q)^jq^{m-j}=1$. In particular, for $m=1$ we have $k=2$, and then we may take $(a_1,a_2)=(1-q,q)$. 
For $m=2$ we have $k=4$, and then we may take 
$$(a_1,a_2,a_3,a_4)=\big((1-q)^2,\,(1-q)q,\,(1-q)q,\,q^2\big).$$ 
Choosing, in the latter case, $S=\{1,2,3,4\}$ and $q=Q(\sqrt{2E_b/N_0})$, where $Q(x):=\int_x^\infty\frac1{\sqrt{2\pi}}\,e^{-u^2/2}du$ is the tail probability for the standard normal distribution, $E_b$ is the energy per bit, and $N_0/2$ is the noise power spectral density (PSD), we see that the resulting particular case of the binomial model covers the so-called 
quadrature phase-shift keying (QPSK) digital communication scheme over an additive white Gaussian noise (AWGN) channel (cf.\ e.g. \cite[page~313]{erdogmus-principe}), which in fact provided the motivation for the general binomial model. 

Another interesting series of cases is given by what may be called the exponential model (with a parameter $q\in(0,1)$), in which for each $x\in S$ the vector $(\rho_1(x),\dots,\rho_k(x))$ is a permutation of a vector $(a_1,\dots,a_k)$, where $a_i:=(1-q)^{i-1}q^{k-i}/c_q$ and $c_q:=c_{k,q}:=\sum_{i=1}^k(1-q)^{i-1}q^{k-i}$, so that all the $a_i$'s are nonnegative and $a_1+\dots+a_k=1$. Informally, the exponential model can be obtained from the binomial one by removing the multiplicities.  

Graphical comparisons of the ``$\De$-bounds'' with the ``$H$-bounds'' (as functions of the parameter $q$) for the cases $k=2,4,8$ of the binomial and exponential models are presented in Figure~\ref{fig:bin-exp}. Note here that, by symmetry, it is enough to consider $q\in(0,1/2]$. 
Obviously, for $k=2$ the binomial and exponential models are the same, and in this case they are the same as the essentially unique general ``pure'' model for $k=2$, fully considered in Proposition~\ref{prop:k=2}. Accordingly, the pictures in the first row in Figure~\ref{fig:bin-exp} are identical to each other, and the graphs of the bounds $U(\De)$, $L(\De)$, and $L_\FM(H)$ are the same as that of $p_*$. 
The cases $k=4,8$ in Figure~\ref{fig:bin-exp} illustrate the first sentence in Remark~\ref{rem:comp}. It appears that the comparisons in the exponential model are somewhat more favorable to the $\De$-bounds than they are in the binomial model. 


\begin{figure}[h]%
\includegraphics[width=\columnwidth]{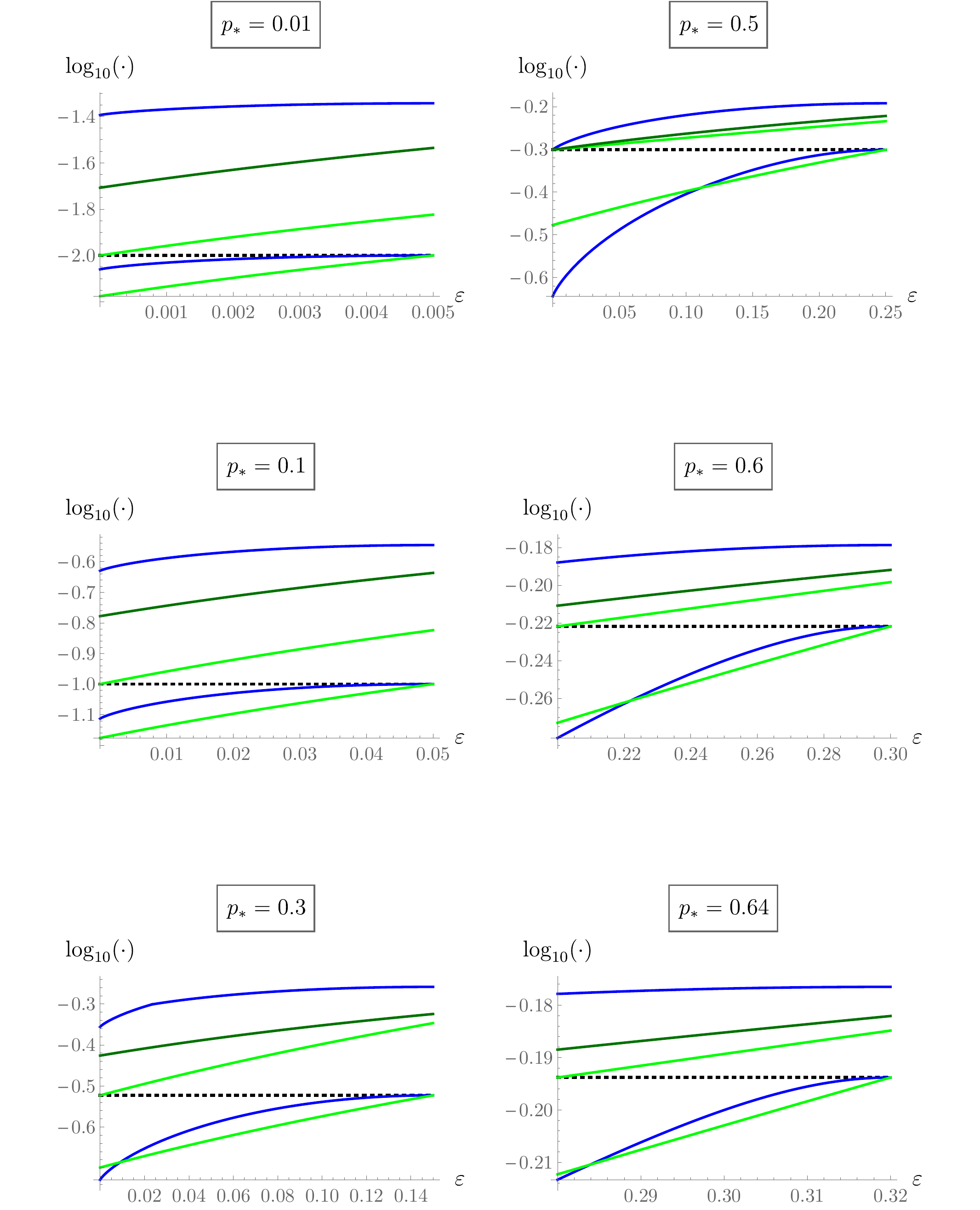}%
\caption{Graphs of $\log_{10}L(\De)$ (green), $\log_{10} U(\De)$ (green), $\log_{10} U_\simpl(\De)$  (dark green), $\log_{10} L_\FM(H)$  (blue), $\log_{10} U_\FM(H)$ (blue), and $\log_{10}p_*$ (dashed) for $k=3$ and $p_*\in\{0.01,0.1,0.3,\break 
0.5,0.6,0.64\}$.}%
\label{fig:k=3}%
\end{figure}

\newpage

\begin{figure}[h]%
\includegraphics[width=\columnwidth]{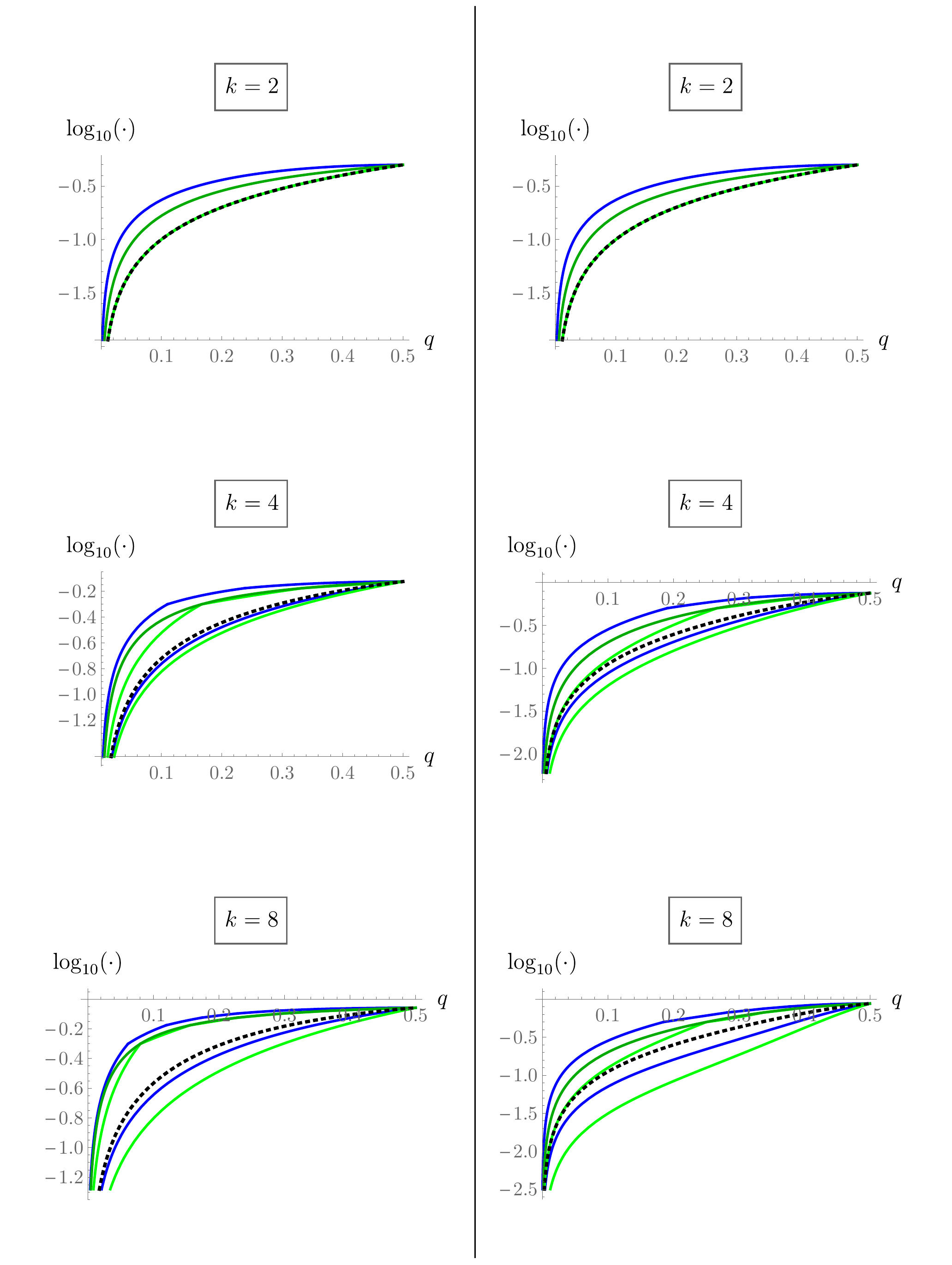}%
\caption{Graphs of $\log_{10}L(\De)$ (green), $\log_{10} U(\De)$ (green), $\log_{10} U_\simpl(\De)$  (dark green), $\log_{10} L_\FM(H)$  (blue), $\log_{10} U_\FM(H)$ (blue), and $\log_{10}p_*$ (dashed) for $k=2,4,8$.  
Left column: Binomial model. Right column: Exponential model.}  
\label{fig:bin-exp}%
\end{figure}

\hspace*{5cm}


\newpage

Upper and lower bounds on the best possible misclassification probability in terms of Renyi's conditional entropy were announced in \cite{erd-princ-proc}, where the input and output r.v.'s were denoted by $M$ and $W$, respectively, and both $M$ and $W$ were assumed to take values in the same set $\{1,\dots,k\}$.  Renyi's conditional entropy used in \cite{erd-princ-proc} was defined by the formula 
\begin{equation}
	H_\be(W|M):=\sum_{m=1}^k \P(M=m)H_\be(W|m),
\end{equation}	
where
\begin{equation}
	H_\be(W|m):=
	\frac1{1-\be}\,\log\sum_{w=1}^k \P(W=w|M=m)^\be,\quad\log:=\log_2, 
\end{equation}
and $\be\in(0,1)\cup(1,\infty)$, so that Shannon's conditional entropy  
\begin{equation}
	H_1(W|M):=H(W|M)=-\sum_{m=1}^k \P(W=w)\log\P(W=w|M=m)
\end{equation}
may be viewed as a limit case of Renyi's: $H_\be(W|M)\to H(W|M)$ as $\be\to1$. 
Note that $H_\be(W|M)$ is the conditional entropy of the output $W$ given the input $M$. 
Thus, for some reason, the standard roles of the input and output r.v.'s were reversed in \cite{erd-princ-proc}; cf. e.g.\ the conditional entropy 
\emph{of the input given the output} used in \cite[formula~(3)]{fed-merhav}. 

The mentioned upper and lower bounds announced in \cite{erd-princ-proc} were given by inequalities of the form 
\begin{equation}\label{eq:E-P}
\frac{H_\al(W|M)-H_S(e)}{N_1}\le P(e)\le\frac{H_\be(W|M)-H_S(e)}{N_2}, 	
\end{equation}
where $\be<1\le\al$, $N_1,N_2$ are some positive expressions, 
$$H_S(e):=-P(e)\log P(e)-(1-P(e))\log(1-P(e)),$$ 
and $P(e)$ is the ``the classification error probability'', which doers not seem to be explicitly defined in \cite{erd-princ-proc}. 
A proof of these bounds was offered later in \cite[Appendix]{erdogmus-principe}, from which it appears (see \cite[formula~(A.1)]{erdogmus-principe}) that $P(e)$ is understood as $\P(W\ne M)$. 
However, the proof in \cite{erdogmus-principe} of the upper bound on $P(e)$ in \eqref{eq:E-P} appears to be mistaken, and the upper bound itself may be negative and thus false in general. 

Indeed, the second inequality in \eqref{eq:E-P} appears to be obtained in \cite{erdogmus-principe} by multiplying the expressions in \cite[formula~(A.7)]{erdogmus-principe} by $p(m_k)$, then summing in $k$ (in the notations there), and finally using the inequality $\sum_k p(m_k)H_S(e|m_k)[=H_S(e|M)]\ge H_S(e)$. However, in general the reverse inequality is true: $H_S(e|M)\le H_S(e)$; cf.\ even the derivation of (A.6) from (A.5) in \cite{erdogmus-principe}. Also, the proof in \cite{erdogmus-principe} does not use the condition that $P(e)$ is \emph{the smallest possible} classification error probability, and without such a condition no reasonable upper bound on $P(e)$ is possible. 

More importantly, as mentioned above, the upper bound on $P(e)$ in \eqref{eq:E-P} is false in general. For a very simple counterexample, suppose that $k=2$, $\P(W=1,M=1)=\P(W=1,M=2)=1/2$, and  $\P(W=2,M=1)=\P(W=2,M=2)=0$. Then $P(e)=1/2$, $H_\be(W|M)=0$ for all $\be$, and $H_S(e)=1$, so that the presumed upper bound on $P(e)$ in \eqref{eq:E-P} is negative and thus false. 

Another two pairs of upper and lower bounds were announced in \cite{erd-princ-proc}, in terms of the joint Renyi's joint entropy $H_\be(W,M)$ of $(W,M)$ and Renyi's mutual information $I_\be(W,M)$ between $W$ and $M$, rather than Renyi's conditional entropy, with no apparent proofs for these additional bounds. However, the same simple example given above will quite similarly show that these additional upper bounds are false in general, too. 

As stated in the abstract in \cite{erdogmus-principe}, the mentioned bounds in \cite{erd-princ-proc} on $P(e)$ ``were practically incomputable'', because $P(e)$ itself appears in those bounds. Therefore, an effort was made in \cite{erdogmus-principe} to modify the bounds in \cite{erd-princ-proc} -- by making the upper bounds greater and the lower bounds smaller -- to make them computable. However, in view of what has been said, the modified upper bounds in \cite{erdogmus-principe} remain without a valid proof. As for the modified lower bounds, it is stated in \cite[page~313]{erdogmus-principe} that, in the examples considered there, ``the modified lower bounds
are not depicted because they turn out to be negative''. 

For all these reasons, we shall not attempt to compare our bounds with ones in \cite{erd-princ-proc,erdogmus-principe}.

\section{Proofs}\label{proofs}

\begin{proof}[Proof of Proposition~\ref{prop:f_*}] 
Clearly, $f_*$ is a map from $S$ to $[k]$. Also, $f_*$ is measurable, since $f_*^{-1}(\{y\})=B_y\setminus\bigcup_{z=1}^{y-1}B_z\in\Si$ for each $y\in[k]$, where $B_y:=\bigcap_{z=1}^k B_{y,z}$ and $B_{y,z}:=\{x\in S\colon\rho_y(x)\ge\rho_z(x)\}\in\Si$. 
Thus, $f_*$ is a classifier. 
Moreover, for any classifier $f$, 
\begin{align*}
	1-p_f=\P(f(X)=Y)
	&=\sum_{y=1}^k\P(Y=y,f(X)=y) \\ 
&=\sum_{y=1}^k\int_S\ii{f(x)=y}\mu_y(\d x) \\ 
	&=\sum_{y=1}^k\int_S\ii{f(x)=y}\rho_y(x)\,\mu(\d x) \\ 
	&	=\int_S\sum_{y=1}^k\ii{f(x)=y}\rho_y(x)\,\mu(\d x) \\ 
		&\le\int_S\max\limits_{y=1}^k\rho_y(x)\,\mu(\d x) =
		1-p_{f_*}.  
\end{align*}
This completes the proof of Proposition~\ref{prop:f_*}. 
\end{proof}

In view of Proposition~\ref{prop:f_*} and \eqref{eq:De}, 
Theorem~\ref{th:} and Proposition~\ref{prop:exact} follow immediately by the lemma below, with $\rho_i(x)$ in place of $a_i$.  

\begin{lemma}\label{lem:}
Suppose that 
\begin{equation}\label{eq:a_i}
\text{$a_1,\dots,a_k$ are nonnegative real numbers such that $\sum_1^k a_i=1$.}	
\end{equation}
Then 
\begin{equation}\label{eq:de-bounds}
	L(\de)\le1-\max\limits_1^k a_i\le U(\de)\le U_\simpl(\de), 
\end{equation}
where 
\begin{equation*}
	\de:=\sum_{1\le i<j\le k}|a_i-a_j|
\end{equation*}
and the functions $L$, $U$, and $U_\simpl$ are defined as in Theorem~\ref{th:}. 

Under the stated conditions on the $a_i$'s, one always has $0\le\de\le k-1$; cf.\ Remark~\ref{rem:De}. 

The bounds $L(\de)$ and $U(\de)$ on $1-\max\limits_1^k a_i$ are exact for each possible value of $\de$: 
\begin{enumerate}[(i)]
	\item For each $d\in[0,k-1]$, if 
\begin{equation}\label{eq:a_i-low}
	a_1=\frac{1+d}k\quad\text{and}\quad a_2=\dots=a_k=\frac1k-\frac d{k(k-1)}, 
\end{equation}
then condition \eqref{eq:a_i} holds, $\de=d$, \big[$\max\limits_1^k a_i=a_1$,\big] and the first inequality in \eqref{eq:de-bounds} turns into the equality. 
If the $a_i$'s satisfy condition \eqref{eq:a_i} but do not constitute a permutation of the $a_i$'s as in \eqref{eq:a_i-low} with $d=\de$, then the first inequality in \eqref{eq:de-bounds} is strict. 
	\item For each $d\in[0,k-1]$, if 
\begin{equation}\label{eq:a_i-hi}
	a_i=(1-U(d))\ii{i\le k-\ceil d}+\frac{\ceil d-d}{k+1-\ceil d}\ii{i=k+1-\ceil d} 
\end{equation}
for all $i\in[k]$, 
then condition \eqref{eq:a_i} holds, $\de=d$, \big[$\max\limits_1^k a_i=a_1$,\big] and the second inequality in \eqref{eq:de-bounds} turns into the equality. 
If the $a_i$'s satisfy condition \eqref{eq:a_i} but do not constitute a permutation of the $a_i$'s as in \eqref{eq:a_i-hi} with $d=\de$, then the second inequality in \eqref{eq:de-bounds} is strict.
\end{enumerate}  
\end{lemma}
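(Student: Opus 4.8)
The plan is to reduce everything to optimization over the ordered simplex. By relabeling, we may assume $a_1 \ge a_2 \ge \dots \ge a_k \ge 0$ with $\sum_i a_i = 1$, so that $\max_i a_i = a_1$, and in this ordering the awkward absolute values collapse: $\de = \sum_{i<j}(a_i - a_j) = \sum_{i=1}^k (k+1-2i)\,a_i$. Thus both the quantity to be bounded, $1 - a_1$, and the constraint $\de = d$ are \emph{linear} in $a = (a_1,\dots,a_k)$, and the feasible set is the polytope $P_d := \{a : a_1 \ge \dots \ge a_k \ge 0,\ \sum_i a_i = 1,\ \sum_i (k+1-2i)a_i = d\}$. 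Minimizing and maximizing the linear functional $1 - a_1$ over $P_d$ is therefore attained at vertices of $P_d$, and the whole lemma becomes: (a) identify the vertices, (b) evaluate $1-a_1$ at each, (c) check that the minimum equals $L(d)$, achieved (among orderings) only at the configuration \eqref{eq:a_i-low}, and the maximum equals $U(d)$, achieved only at \eqref{eq:a_i-hi}; and finally (d) verify the elementary inequality $U(d) \le U_\simpl(d)$ and the claim that $U$ is the linear interpolant of $U_\simpl$ at integers.

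For the \textbf{lower bound}, here is the quick route that avoids a full vertex enumeration. To minimize $1-a_1$ we want $a_1$ as large as possible; given $a_1$ fixed, the remaining mass $1-a_1$ sits on $a_2 \ge \dots \ge a_k \ge 0$, and spreading it equally, $a_2 = \dots = a_k = \frac{1-a_1}{k-1}$, is the choice that \emph{maximizes} $\de$ among all tails with that given $a_1$ is false — rather, one computes $\de$ directly: with $a_1$ free and the tail in any order, $\de = (k-1)a_1 - \big[\text{something} \le \text{value at equal tail}\big]$. I would instead argue: among all $a \in P_d$, the map $a \mapsto a_1$ is maximized at \eqref{eq:a_i-low} — plug in to get $a_1 = \frac{1+d}{k}$, hence $1 - a_1 = 1 - \frac{1+d}{k} = L(d)$ — and any feasible $a$ with $a_1$ strictly smaller gives $1-a_1 > L(d)$, with equality forcing, via strict convexity considerations on the tail, the equal-tail configuration. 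Concretely: from $\de = (k-1)a_1 - 2\sum_{i\ge 2}(i-1)a_i$ and the constraint $\sum_{i\ge 2} a_i = 1-a_1$, an averaging/rearrangement estimate shows $\sum_{i\ge 2}(i-1)a_i \ge \tfrac{k}{2}(1-a_1)$ is the wrong direction; the correct bound $\sum_{i \ge 2}(i-1) a_i \le \tfrac{k-1}{2}(1-a_1) \cdot \tfrac{?}{?}$ — I will pin down the exact coefficient so that solving for $a_1$ yields $a_1 \le \frac{1+d}{k}$ with equality iff the tail is constant. This coefficient bookkeeping is routine once the ordering is fixed.

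For the \textbf{upper bound} (the main obstacle), the extremal shape \eqref{eq:a_i-hi} is a "flat top of height $1-U(d)$ on the first $k-\ceil d$ coordinates, one leftover coordinate, then zeros," so I expect the relevant vertices of $P_d$ to be exactly the configurations with at most one coordinate strictly between $0$ and the common positive value — i.e.\ $a = (t,\dots,t,s,0,\dots,0)$ with $m$ copies of $t$, $0 \le s \le t$. For such $a$: $mt + s = 1$ and a short computation gives $\de$ as an explicit function of $(m,t,s)$; eliminating via the two linear equations leaves $1 - a_1 = 1 - t$ as a function of the integer $m$ and of $d$, and one maximizes over $m \in \{1,\dots,k\}$. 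The maximizer in $m$ should come out to $m = k - \ceil d$ (this is where the ceiling enters), and substituting back produces precisely $U(d) = 1 - \frac{k+1+d-2\ceil d}{(k-\ceil d)(k+1-\ceil d)}$. The delicate points are: justifying that only these "two-level" vertices matter (a general vertex of $P_d$ has at most two active non-boundary constraints beyond $\sum a_i = 1$, forcing all but two of the $a_i$ to be equal or zero — a standard dimension count for polytopes cut out by ordering inequalities plus two equalities), and carrying out the discrete optimization over $m$ cleanly, including the strictness/uniqueness claim. The bound $U(d) \le U_\simpl(d)$ and the linear-interpolation remark then follow by checking $U$ and $U_\simpl$ agree at integer $d$ (where $\ceil d = d$) and that $U$ is affine on each interval $[n, n+1]$ while $U_\simpl(d) = 1 - \frac{1}{k-d}$ is convex there, lying above its chords. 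I anticipate the vertex-structure justification for the upper bound to be the part requiring the most care; everything downstream is explicit algebra with the ceiling function.
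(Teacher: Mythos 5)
Your reduction to a linear program over the ordered simplex $P_d=\{a_1\ge\cdots\ge a_k\ge0,\ \sum_i a_i=1,\ \sum_i(k+1-2i)a_i=d\}$ is a legitimate starting point, but as written the proposal has genuine gaps in both halves. For the lower bound you never actually produce the argument: you state one inequality, declare it to be in the wrong direction, and leave the ``correct coefficient'' to be ``pinned down'' later. That is precisely the content of the claim, not routine bookkeeping, and the strictness/uniqueness assertion (equality only for the equal-tail configuration \eqref{eq:a_i-low}) is not addressed at all. For the upper bound, your characterization of the vertices of $P_d$ is incorrect. A vertex requires $k-2$ active constraints among the $k$ ordering inequalities $a_1\ge a_2\ge\cdots\ge a_k\ge0$, so exactly two of them are slack; the resulting configurations are either three-level vectors $(t,\dots,t,s,\dots,s,0,\dots,0)$ with \emph{arbitrarily many} copies of the middle value $s$, or two-level vectors $(t,\dots,t,s,\dots,s)$ with $s>0$ and no zeros (the lower-bound extremizer \eqref{eq:a_i-low} is of this last type). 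Your restriction to ``at most one coordinate strictly between $0$ and the common positive value'' therefore omits most of the vertex set, so the one-parameter optimization over $m$ cannot by itself establish $1-a_1\le U(d)$, and certainly not the claim that equality forces \eqref{eq:a_i-hi}.

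The paper's proof avoids the vertex enumeration entirely. Writing $h_i:=a_i-a_{i+1}\ge0$ (with $a_{k+1}:=0$) and $p_i:=ih_i$, the constraints become $p_i\ge0$, $\sum_i p_i=1$, $\sum_i ip_i=k-d$, while $a_1=\sum_i g(i)p_i$ with $g(i):=1/i$ strictly convex. Both bounds are then immediate from affine comparison functions: the secant of $g$ through the endpoints $1$ and $k$ majorizes $g$ and yields $a_1\le\frac{1+d}{k}$ (i.e.\ $1-a_1\ge L(d)$), while the secant through the consecutive integers $k-\ceil d$ and $k+1-\ceil d$ minorizes $g$ and yields $1-a_1\le U(d)$. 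Strict convexity of $g$ makes the equality cases transparent: equality forces $p$ to be supported on the two interpolation nodes, which translates back exactly to \eqref{eq:a_i-low} and \eqref{eq:a_i-hi}. If you want to salvage your route you must either enumerate and compare all vertex types described above (including the uniqueness analysis on faces), or switch to an argument of this majorization type; the latter is far shorter. Finally, a small but telling slip: $U_\simpl(d)=1-\frac1{k-d}$ is \emph{concave} in $d$ (not convex, as you wrote), which is exactly why it dominates its piecewise-linear interpolant $U$.
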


\begin{proof}
It is quite easy to see that $U_\simpl(d)$ is concave in $d\in[0,k-1]$. Moreover, as noted in Remark~\ref{rem:bounds}, $U(d)$ is the linear interpolation of  $U_\simpl(d)$ over $d=0,\dots,k-1$. Thus, we have the last inequality in \eqref{eq:de-bounds}. 

It remains to establish the lower bound $L(\de)$ and upper bound $U(\de)$ on $1-\max\limits_1^k$ and to show that these bounds are attained, with $\de=d$, if and only if the $a_i$'s are as in \eqref{eq:a_i-low} and \eqref{eq:a_i-hi}, respectively. 

By symmetry, without loss of generality (w.l.o.g.) $a_1\ge\cdots\ge a_k$. Then, letting $h_i:=a_i-a_{i+1}$ for $i\in[k]$ (with $a_{k+1}:=0$), we have 
\begin{equation*}
	h_1\ge0,\dots,h_k\ge0,
\end{equation*}
\begin{equation*}
	\max\limits_1^k a_i=a_1=\sum_1^k h_i,
\end{equation*}
\begin{multline*}
	\de=\sum_{1\le i<j\le k}(a_i-a_j)=\sum_{1\le i<j\le k}\sum_{q=i}^{j-1}h_q
	=\sum_{q=1}^{k-1}h_q\sum_{1\le i\le q}\,\sum_{q+1\le j\le k}1 \\ 
	=\sum_{q=1}^{k-1}h_q\, q(k-q)=\sum_{i=1}^k i(k-i)h_i,  
\end{multline*}
\begin{equation*}
	1=\sum_1^k a_j=\sum_{j=1}^k\sum_{i=j}^k h_i=\sum_{i=1}^k ih_i. 
\end{equation*}

Take now indeed any $d\in[0,k-1]$. 
Introducing 
\begin{equation*}
	p_i:=ih_i
\end{equation*}
for $i\in[k]$, we further restate the conditions on the $a_i$'s (with $\de$ equal the prescribed value $d\in[0,k-1]$, as desired): 
\begin{equation}\label{eq:sum p_i}
	p_1\ge0,\dots,p_k\ge0,\, \sum_{i=1}^k p_i=1, 
\end{equation}
\begin{equation}\label{eq:sum ip_i}
	\sum_{i=1}^k (k-i)p_i=d\quad\text{or, equivalently,\quad }\sum_{i=1}^k ip_i=k-d, 
\end{equation}
and 
\begin{equation*}
	\max\limits_1^k a_i=a_1=\sum_1^k g(i)p_i, 
\end{equation*}
where $g(i):=\frac1i$; here and in the rest of the proof of Lemma~\ref{lem:}, $i$ is an arbitrary number in the set $[k]$. 

Introduce also  
\begin{equation*}
g^U(i):=g(k-m-1)+[g(k-m)-g(k-m-1)][i-(k-m-1)]	
\end{equation*} 
and 
\begin{equation*}
	p_i^U:=(d-m)\ii{i=k-m-1}+(m+1-d)\ii{i=k-m},\ 
\end{equation*}
where 
\begin{equation*}
	m:=\ceil d-1;
\end{equation*} 
here and in the rest of the proof of Lemma~\ref{lem:}, $i$ is an arbitrary number in the set $[k]$. One may note at this point that $m\in\{0,\dots,k-2\}$. 
Note that the function $g$ is strictly 
convex on the set $[k]$, the function $g^U$ is affine, $g^U=g$ on the set $\{k-m-1,k-m\}$, and hence $g>g^U$ on $[k]\setminus\{k-m-1,k-m\}$. Moreover, conditions \eqref{eq:sum p_i} and \eqref{eq:sum ip_i} hold with $p_i^U$ in place of $p_i$. So, 
\begin{multline}\label{eq:U chain}
\sum_1^k g(i)p_i\ge \sum_1^k g^U(i)p_i=g^U\Big(\sum_1^k ip_i\Big) \\ 
=g^U(k-d)
=g^U\Big(\sum_1^k ip_i^U\Big)=\sum_1^k g^U(i)p_i^U=\sum_1^k g(i)p_i^U; 		
\end{multline}
the inequality here holds because $g\ge g^U$; the first and fourth equalities follow because the function $g^U$ is affine; the second and third equalities hold because of the condition \eqref{eq:sum ip_i} for the $p_i$'s and $p_i^U$'s; and the last equality follows because $g^U(i)=g(i)$ for $i$ in the set $\{k-m-1,k-m\}$, whereas $p_i^U=0$ for $i$ not in this set. 
We conclude that, under conditions \eqref{eq:sum p_i} and \eqref{eq:sum ip_i}, $\max\limits_1^k a_i$ is minimized -- or, equivalently, $1-\max\limits_1^k a_i$ is maximized -- if and only if $p_i=p_i^U$ for all $i$; that is, if and only if  
$h_i=p_i^U/i$ or all $i$; that is, if and only if the $a_i$'s -- related to the $p_i$'s by the formula $a_i=\sum_{j=i}^k \frac1i\,p_i$ -- are as in \eqref{eq:a_i-hi}. 
This concludes the proof of the part of Lemma~\ref{lem:} concerning the upper bound $U(\cdot)$. 

The proof of the part of Lemma~\ref{lem:} concerning the lower bound $L(\cdot)$ is similar and even  easier. Here let 
\begin{equation*}
g^L(i):=g(1)+[g(k)-g(1)]\frac{i-1}{k-1}	
\end{equation*} 
and 
\begin{equation*}
	p_i^L:=\frac d{k-1}\,\ii{i=1}+\Big(1-\frac d{k-1}\Big)\ii{i=k}. 
\end{equation*} 
Recall that the function $g$ is strictly 
convex on the set $[k]$. Note that the function $g^L$ is affine, $g^L=g$ on the set $\{1,k\}$, and $g^L>g$ on the set $[k]\setminus\{1,k\}$, so that  $g\le g^L$ on $[k]$. Moreover, conditions \eqref{eq:sum p_i} and \eqref{eq:sum ip_i} hold with $p_i^L$ in place of $p_i$. So, 
\begin{multline*}
\sum_1^k g(i)p_i\le \sum_1^k g^L(i)p_i=g^L\Big(\sum_1^k ip_i\Big) \\ 
=g^L(k-d)
=g^L\Big(\sum_1^k ip_i^L\Big)=\sum_1^k g^L(i)p_i^L=\sum_1^k g(i)p_i^L;  		
\end{multline*}
cf.\ \eqref{eq:U chain}. 
So, under conditions \eqref{eq:sum p_i} and \eqref{eq:sum ip_i}, $\max\limits_1^k a_i$ is maximized -- or, equivalently, $1-\max\limits_1^k a_i$ is minimized -- if and only if $p_i=p_i^L$ for all $i$; that is, if and only if 
$h_i=p_i^L/i$ or all $i$; that is, if and only if the $a_i$'s are as in \eqref{eq:a_i-low}. 
This concludes the proof of the part of Lemma~\ref{lem:} concerning the upper bound $L(\cdot)$ and hence the entire proof of the lemma. 
\end{proof}

\begin{proof}[Proof of Proposition~\ref{prop:comp-lo}]
We have to show that, under the conditions of this proposition, $L(\De)>L_\FM(H)$. Recalling \eqref{eq:L_FN} and the fact that the function $\Phi$ is increasing, we can rewrite inequality $L(\De)>L_\FM(H)$ as $\Phi(L(\De))>H$. In view of \eqref{eq:H}, \eqref{eq:De}, and \eqref{eq:L,U}, $H=\ln\ell$, $\De=k-\ell$, and $L(\De)=\frac{\ell-1}k$. 
So, inequality $\Phi(L(\De))>H$ can be in turn rewritten as 
\begin{equation}\label{eq:comp-lo}
	d(\ell):=d_k(\ell):=\Phi\Big(\frac{\ell-1}k\Big)-\ln\ell\overset{\text{(?)}}>0
\end{equation}
for $k$ and $\ell$ as in the conditions in Proposition~\ref{prop:comp-lo}. 
For $k\in\{6,7,8,9\}$ and $\ell=k-4$, as well as for $k\in\{3,4,5\}$ and $\ell\ge2$ in the set $\{k-3,k-2,k-1\}$,
inequality \eqref{eq:comp-lo} can be verified by direct calculations. So, without loss of generality $k\ge6$ and $\ell\in[k-3,k)$, where we may allow $\ell$ to take non-integral values as well. Note that 
\begin{equation*}
\begin{aligned}
	d''(\ell)(\ell-1) \ell^2 (k+1 - \ell)
	&=-1 + 2 \ell - 2 \ell^2 + (\ell-1) k \\ 
	&\le
	-1 + 2 \ell - 2 \ell^2 + (\ell-1) (\ell+3)=-(\ell-2)^2<0
\end{aligned}
\end{equation*}
for $\ell>2$. Hence, $d(\ell)$ is strictly concave in $\ell>2$ such that $\ell\in[k-3,k]$. Also, $d(k)=0$. 
So, to complete the proof of Proposition~\ref{prop:comp-lo}, it suffices to show that 
\begin{equation}\label{eq:td}
	\tilde d(k):=k\,d_k(k-3)
	\big[=(k-4) \ln \tfrac{k(k-1)}{k-4}+4 \ln\tfrac{k}{4}-k \ln (k-3)\big]
   \overset{\text{(?)}}>0
\end{equation}
for $k\ge6$. We find that 
\begin{equation*}
	\tilde d''(k)=-\frac{36}{(k-4) (k-3)^2 (k-1)^2 k}<0
\end{equation*}
for $k\ge6$, and so, $\tilde d(k)$ is strictly concave in $k\ge6$. Moreover, $\tilde d(6)=0.446\dots>0$ and $\tilde d(k)\to6 - 8 \ln2=0.454\dots>0$ as $k\to\infty$. 
Thus, inequality \eqref{eq:td} indeed holds for $k\ge6$, and the proof of Proposition~\ref{prop:comp-lo} is now complete. 
\end{proof} 

\begin{proof}[Proof of Proposition~\ref{prop:comp-hi}]
Under the conditions of this proposition, 
\begin{equation*}
	H=-\Big(1-\frac{\nu-1}k\Big)\ln\Big(1-\frac{\nu-1}k\Big)-\frac{\nu-1}k\,\ln\frac{\nu-1}{k(k-1)}\to0
\end{equation*}
and hence, by \eqref{eq:U_FN} and \eqref{eq:e(H)}, $U_\FM(H)\to0$ 
as $k\to\infty$. 

On the other hand, here $\De=k-\nu$. Therefore and because $U(\De)$ is decreasing in $\De$,  
\begin{equation*}
	U(\De)\ge U(\ceil\De)=U_\simpl(\ceil\De)=1-\frac1{k-\ceil\De}=1-\frac1{\lfloor\nu\rfloor},
\end{equation*}
which latter is a positive constant with respect to $k$ and hence does not go to $0$ as $k\to\infty$. Thus, the conclusion of Proposition~\ref{prop:comp-hi} follows. 
\end{proof}

\begin{proof}[Proof of Proposition~\ref{prop:k=2}] 
Here w.l.o.g.\ $\{\rho_1(x),\rho_2(x)\}=\{1-p,p\}$ for each $x\in S$, where $p:=p_*\in[0,1/2]$. Then the equalities $U(\De)=L(\De)=L_\FM(H)=p_*$ follow immediately from the definitions. 

It remains to show that $U_\FM(H)>U_\simpl(\De)>p$ for $p\in(0,1/2)$. The second inequality here is obvious, since in this case $U_\simpl(\De)=\frac{2p}{1+2p}$. To verify that $U_\FM(H)>U_\simpl(\De)$ for $p\in(0,1/2)$, consider $d(p):=U_\FM(H)-U_\simpl(\De)
=-\frac12\,(1-p)\log_2(1-p)-\frac12\,p\log_2 p-\frac{2p}{1+2p}$. 
It is easy to see that 
$$d''(p)(1 - p) p (1 + 2 p)^3 \ln4
= -1 + p (16 \ln2-6)-p^2 (12 + 16 \ln2)- 8 p^3<0$$ 
for $p\in(0,1/2)$, so that $d$ is strictly concave on $(0,1/2)$. Also, $d(0+)=d(1/2)=0$. So, $d>0$ on $(0,1/2)$, which completes the proof of Proposition~\ref{prop:k=2}.     
\end{proof}

In conclusion, let us mention a sample of other related results found in the literature. 
In \cite{toussaint}, for $k=2$, sharp lower bounds on the misclassification probabilities
for three particular classifiers in terms of characteristics generalizing the Kullback--Leibler divergence and the Hellinger distance we obtained.
Lower and upper bounds on the misclassification probability based on Renyi's information were given in \cite{erdogmus-principe}. 
Upper and lower bounds on the 
risk of an empirical risk minimizer for $k=2$ were obtained in \cite{massart-nedelec} and \cite{pin-kontor}, respectively. 





\bibliographystyle{abbrv}
\bibliography{P:/pCloudSync/mtu_pCloud_02-02-17/bib_files/citations12.13.12}

\def\cprime{$'$} \def\polhk#1{\setbox0=\hbox{#1}{\ooalign{\hidewidth
  \lower1.5ex\hbox{`}\hidewidth\crcr\unhbox0}}}
  \def\polhk#1{\setbox0=\hbox{#1}{\ooalign{\hidewidth
  \lower1.5ex\hbox{`}\hidewidth\crcr\unhbox0}}}
  \def\polhk#1{\setbox0=\hbox{#1}{\ooalign{\hidewidth
  \lower1.5ex\hbox{`}\hidewidth\crcr\unhbox0}}} \def\cprime{$'$}
  \def\polhk#1{\setbox0=\hbox{#1}{\ooalign{\hidewidth
  \lower1.5ex\hbox{`}\hidewidth\crcr\unhbox0}}} \def\cprime{$'$}
  \def\polhk#1{\setbox0=\hbox{#1}{\ooalign{\hidewidth
  \lower1.5ex\hbox{`}\hidewidth\crcr\unhbox0}}} \def\cprime{$'$}
  \def\cprime{$'$}
\begin{thebibliography}{1}

\bibitem{erd-princ-proc}
D.~Erdogmus and J.~Principe.
\newblock Information transfer through classifiers and its relation to
  probability of error.
\newblock In {\em Proceedings. IJCNN '01. International Joint Conference on
  Neural Networks 2001, (Washington, DC, 2001)}, pages 50--54, 2001.

\bibitem{erdogmus-principe}
D.~Erdogmus and J.~C. Principe.
\newblock Lower and upper bounds for misclassification probability based on
  {R}enyi's information.
\newblock {\em Journal of VLSI signal processing systems for signal, image and
  video technology}, 37:305--317, 2004.

\bibitem{fed-merhav}
M.~Feder and N.~Merhav.
\newblock Relations between entropy and error probability.
\newblock {\em IEEE Transactions on Information Theory}, 40(1):259--266, 1994.

\bibitem{pin-kontor}
A.~Kontorovich and I.~Pinelis.
\newblock Exact lower bounds for the agnostic probably-approximately-correct
  ({PAC}) machine learning model.
\newblock arXiv:1606.08920 [cs.LG], 2016.

\bibitem{massart-nedelec}
P.~Massart and E.~N\'ed\'elec.
\newblock Risk bounds for statistical learning.
\newblock {\em Ann. Statist.}, 34(5):2326--2366, 2006.

\bibitem{toussaint}
G.~T. Toussaint.
\newblock On the divergence between two distributions and the probability of
  misclassification of several decision rules.
\newblock In {\em Proceedings of the Second International Joint Conference on
  Pattern Recognition}, pages 27--34, 1974.

\end{thebibliography}


\end{document}